\numberwithin{equation}{section}
\newtheorem{Theorem}{Theorem}[section]
\newtheorem{Proposition}[Theorem]{Proposition}
 { \theoremstyle{definition}
\newtheorem{Definition}[Theorem]{Definition}
\newtheorem{Remark}[Theorem]{Remark} }
\begin{document}

\allowdisplaybreaks

\newcommand{\arXivNumber}{1907.11421}

\renewcommand{\thefootnote}{}

\renewcommand{\PaperNumber}{051}

\FirstPageHeading

\ShortArticleName{Dual Invertible Polynomials with Permutation Symmetries}

\ArticleName{Dual Invertible Polynomials with Permutation\\ Symmetries and the Orbifold Euler Characteristic\footnote{This paper is a~contribution to the Special Issue on Algebra, Topology, and Dynamics in Interaction in honor of Dmitry Fuchs. The full collection is available at \href{https://www.emis.de/journals/SIGMA/Fuchs.html}{https://www.emis.de/journals/SIGMA/Fuchs.html}}}

\Author{Wolfgang EBELING~$^\dag$ and Sabir M. GUSEIN-ZADE~$^\ddag$}

\AuthorNameForHeading{W.~Ebeling and S.M.~Gusein-Zade}

\Address{$^\dag$~Leibniz Universit\"{a}t Hannover, Institut f\"{u}r Algebraische Geometrie,\\
\hphantom{$^\dag$}~Postfach 6009, D-30060 Hannover, Germany}
\EmailD{\href{mailto:ebeling@math.uni-hannover.de}{ebeling@math.uni-hannover.de}}

\Address{$^\ddag$~Moscow State University, Faculty of Mechanics and Mathematics,\\
\hphantom{$^\ddag$}~Moscow, GSP-1, 119991, Russia}
\EmailD{\href{mailto:sabir@mccme.ru}{sabir@mccme.ru}}

\ArticleDates{Received July 29, 2019, in final form June 01, 2020; Published online June 11, 2020}

\Abstract{P.~Berglund, T.~H\"ubsch, and M.~Henningson proposed a method to construct mirror symmetric Calabi--Yau manifolds. They considered a pair consisting of an invertible polynomial and of a finite (abelian) group of its diagonal symmetries together with a dual pair. A.~Takahashi suggested a method to generalize this construction to symmetry groups generated by some diagonal symmetries and some permutations of variables. In a previous paper, we explained that this construction should work only under a special condition on the permutation group called parity condition (PC). Here we prove that, if the permutation group is cyclic and satisfies PC, then the reduced orbifold Euler characteristics of the Milnor fibres of dual pairs coincide up to sign.}

\Keywords{group action; invertible polynomial; orbifold Euler characteristic; mirror symmetry;
Berglund--H\"ubsch--Henningson--Takahashi duality}

\Classification{14J33; 57R18; 32S55}

\begin{flushright}
\begin{minipage}{65mm}
\it Dedicated to Dmitry Borisovich Fuchs\\
on the occasion of his 80th birthday
\end{minipage}
\end{flushright}

\renewcommand{\thefootnote}{\arabic{footnote}}
\setcounter{footnote}{0}

\section{Introduction} \label{sect:Intro}

The idea of mirror symmetry came to mathematics from physics. In the simplest form, it refers to
the observation that there exist pairs of Calabi--Yau manifolds with symmetric sets of Hodge numbers.
It implies, in particular, that their Euler characteristics coincide up to sign.
In \cite{BH2, BH1}, P.~Berglund, T.~H\"ubsch, and M.~Henningson suggested a method to construct mirror symmetric Calabi--Yau manifolds. They considered pairs $(f,G)$ consisting of a quasihomogeneous polynomial~$f$ of a special type (an invertible one) and of a finite (abelian) group $G$ of its diagonal symmetries. For a~pair $(f,G)$ they constructed a dual pair $\big(\widetilde{f}, \widetilde{G}\big)$. For certain pairs $(f,G)$, a~crepant resolution of the quotient $\{f =0 \}/G$ of the subvariety defined by the equation $f=0$ in the weighted projective space is a Calabi--Yau manifold. Berglund, H\"ubsch, and Henningson claimed that the manifolds constructed for the pairs $(f,G)$ and $\big(\widetilde{f}, \widetilde{G}\big)$ are mirror symmetric to each other. Berglund and Henningson~\cite{BH2} proved a symmetry property for the elliptic genera of them (see also~\cite{Kawai-Yang}).

Instead of working with the hypersurface $\{ f=0 \}$ in the weighted projective space one can consider
the Milnor fibre $V_f=\{ f=1 \}$ in the affine space with the action of the group~$G$.
In this case one has to compare orbifold Hodge numbers of the Milnor fibres of dual pairs
and thus the reduced orbifold Euler characteristics of them. There were some
symmetries found for invariants of the pairs $(V_f, G)$ and $\big(V_{\widetilde{f}}, \widetilde{G}\big)$.
In particular, in~\cite{EG-MMJ}, it was shown that the reduced orbifold Euler characteristics
$\overline{\chi}(V_f,G)$ and $\overline{\chi}\big(V_{\widetilde{f}}, \widetilde{G}\big)$ coincide up to sign. (This statement holds for arbitrary pairs $(f,G)$, not only for those giving Calabi--Yau manifolds.)
Besides that, in~\cite{EG-BLMS}, another special sort of symmetry (called Saito duality) was found between the reduced equivariant Euler characteristics $\overline{\chi}^G(V_f)$ and
$\overline{\chi}^{\widetilde{G}}\big(V_{\widetilde{f}}\big)$ (with values in the Burnside rings of the groups) of the Milnor fibres. Initially one could not see a relation of this symmetry with the
mirror one. Later it was understood that the statement for the orbifold Euler characteristics can be
deduced from the Saito duality (in the case of abelian (!) groups; see a discussion below, and see~\cite{EG-1811} for a proof of this fact).

Based on an idea of A.~Takahashi, in \cite{EG-IMRN}, the notion of dual pair was generalized to the following situation. Let $f$ be an invertible polynomial in $n$ variables, let $S$ be a subgroup of the group $S_n$ of permutations of the variables preserving the polynomial $f$, and let $G$ be a group of diagonal symmetries of $f$ invariant with respect to $S$. In this case, the semidirect product $G \rtimes S$ is defined and $f$ is $G \rtimes S$-invariant. (The group $G \rtimes S$ is, in general, not abelian.) One can see that the polynomial $\widetilde{f}$ participating in the BHH-dual pair $\big(\widetilde{f}, \widetilde{G}\big)$ is preserved by the group $S$ and that the dual subgroup $\widetilde{G}$ is $S$-invariant. Therefore, $\widetilde{f}$ is invariant with respect to the semidirect product $\widetilde{G} \rtimes S$. The Berglund--H\"ubsch--Henningson--Takahashi (BHHT) dual to the pair $(f, G \rtimes S)$ is the pair $\big(\widetilde{f},\widetilde{G} \rtimes S\big)$.

In \cite{EG-IMRN}, a special property of a subgroup $S$ of the permutation group $S_n$ was introduced which was called parity condition (PC). It was shown that a non-abelian analogue of the Saito duality between the reduced equivariant Euler characteristics of the Milnor fibres may hold for BHHT-dual pairs only if the permutation group $S$ satisfies PC. This led to the conjecture that BHHT-dual pairs correspond to mirror symmetric varieties only if the condition PC is satisfied. This conjecture found a support in data about Calabi--Yau threefolds presented in~\cite{Yu}.

One invariant which has to be the same up to sign for mirror symmetric orbifolds is the reduced orbifold Euler characteristic. One can conjecture that the reduced orbifold Euler characteristics of BHHT-dual pairs satisfying the PC condition coincide up to sign. In~\cite{EG-1811}, this conjecture was proved for a very particular case, namely when the polynomial $f$ is atomic of loop type (see the definition in Section~\ref{sect:orbifold}).

Here we prove the conjecture for BHHT-dual pairs with a cyclic permutation group, i.e., $S$~is a cyclic group.

\section{Invertible polynomials and non-abelian duality} \label{sect:invertible_poly}
A polynomial $f$ in $n$ variables is called {\em invertible} if it is quasihomogeneous, consists of $n$ monomials, that is
\[ f(x_1, \dots , x_n)=\sum_{i=1}^n a_i \prod_{j=1}^n x_j^{E_{ij}} ,
\]
where $a_i$ are non-zero complex numbers and the matrix $E=(E_{ij})$ has non-negative integer entries, and
$\det E \neq 0$. This does not imply that $f$ has an isolated critical point at the origin, e.g., $f(x_1,x_2)=x_1^4+x_1^2x_2^2$ is an invertible polynomial with a non-isolated critical point at the origin. If $f$ has an isolated critical point at the origin, then the invertible polynomial is called {\em non-degenerate}. Here we will only consider non-degenerate invertible polynomials and we drop the adjective non-degenerate. Without loss of generality one may assume
that $a_i=1$ for $i=1, \dots , n$.

The {\em group of $($diagonal$)$ symmetries} of $f$ is
\[
G_f=\big\{\underline{\lambda}=(\lambda_1, \dots, \lambda_n)\in({\mathbb C}^*)^n\colon
f(\lambda_1x_1, \dots, \lambda_nx_n)=f(x_1, \dots, x_n)\big\} .
\]
One can see that $G_f$ is an abelian group of order $\vert G_f \vert = \vert\det E \vert$.

The group $S_n$ of permutations on $n$ elements acts on ${\mathbb C}^n$ by permuting the coordinates. Suppose that the polynomial $f$ is invariant with respect to the action of a subgroup $S$ of $S_n$. In this case, $S$ acts on the group $G_f$ by conjugation. The group of transformations of ${\mathbb C}^n$ generated by $G_f$ and $S$ is the semidirect product $G_f \rtimes S$ and the polynomial $f$ is $G_f \rtimes S$-invariant. Because of that, the group $G_f \rtimes S$ acts on the Milnor fibre $V_f=\big\{ \underline{x} \in {\mathbb C}^n \colon f(\underline{x})=1 \big\}$.

\begin{Remark}Elements of $G_f \rtimes S$ can be represented as pairs $(\underline{\lambda}, \sigma)$ with
$\underline{\lambda}=(\lambda_1, \dots , \lambda_n) \in G_f$, $\sigma \in S$. The multiplication in
$G_f \rtimes S$ is given by
\[
(\underline{\lambda}, \sigma) \cdot (\underline{\mu}, \tau) :=
(\underline{\lambda} \sigma(\underline{\mu}), \sigma \tau) ,
\]
where, for $\underline{\mu}=(\mu_1, \dots , \mu_n)$,
\[
\sigma(\underline{\mu}):= \big(\mu_{\sigma^{-1}(1)}, \dots , \mu_{\sigma^{-1}(n)}\big) .
\]
The action of the group $G_f \rtimes S$ on ${\mathbb C}^n$ is defined by
\[
(\underline{\lambda}, \sigma)\underline{x} = \big(\lambda_1x_{\sigma^{-1}(1)}, \dots , \lambda_nx_{\sigma^{-1}(n)}\big)
\]
$\big(\underline{x}=(x_1, \dots , x_n) \in {\mathbb C}^n\big)$.
\end{Remark}

The {\em Berglund--H\"ubsch} (BH) {\em transpose} of $f$ is
\[
\widetilde{f}(x_1, \dots, x_n)=\sum_{i=1}^n \prod_{j=1}^n x_j^{E_{ji}}
\]
(see~\cite{BH1}). One can show that the group $G_{\widetilde{f}}$ of diagonal symmetries of $\widetilde{f}$ is in a natural way isomorphic
to the group $G_f^*=\operatorname{Hom}(G_f,{\mathbb C}^*)$ of characters of $G_f$ (see, e.g., \cite[Proposition~2]{EG-BLMS}). Let $G$ be a subgroup of $G_f$. The {\em $($Berglund--Henningson$)$ dual subgroup} $\widetilde{G}$ in $G_{\widetilde{f}}$ is the set of characters $\alpha\colon G_f \to {\mathbb C}^*$ vanishing (i.e., being equal to~1) on the subgroup $G$ (\cite{BH2}, see also~\cite{Krawitz-thesis} or~\cite{Krawitz}). One has $|\widetilde{G}|=|G_f|/|G|$. The pair $\big(\widetilde{f},\widetilde{G}\big)$ is called the {\em Berglund--H\"ubsch--Henningson} (BHH) {\em dual}
of the pair $(f,G)$.

Let $S$ be a subgroup of $S_n$ preserving $f$ and let $G$ be a subgroup of $G_f$ invariant with respect to $S$, i.e., $\sigma(G)=G$ for any $\sigma \in S$. In this case, the semidirect product $G \rtimes S$ is defined and the polynomial $f$ is $G \rtimes S$-invariant. The BH-transpose $\widetilde{f}$ is also preserved by $S$ and the dual subgroup $\widetilde{G}$ is $S$-invariant. Therefore the group $\widetilde{G} \rtimes S$ preserves the polynomial $\widetilde{f}$. The pair $\big(\widetilde{f},\widetilde{G}\rtimes S\big)$ is called the {\em Berglund--H\"ubsch--Henningson--Takahashi} (BHHT) {\em dual} to the pair $(f,G\rtimes S)$ (see \cite{EG-IMRN}).

One says that the subgroup $S$ of $S_n$ satisfies the {\em parity condition} (PC) if, for any subgroup
$T \subset S$, one has
\[
\dim\big({\mathbb C}^n\big)^T \equiv n \quad \mbox{mod}\ 2,
\]
where $\big({\mathbb C}^n\big)^T:= \big\{ \underline{x} \in {\mathbb C}^n \colon \sigma \underline{x} =
\underline{x} \mbox{ for } \sigma \in T \big\}$ is the fixed point set of~$T$ (see~\cite{EG-IMRN}).

One can show that, if $S$ satisfies PC, then $S \subset A_n$. Moreover, if $S$ is a cyclic group (say, generated by~$s$), then $S$ satisfies PC if and only if $s \in A_n$.

\section[Orbifold Euler characteristic and fixed point sets of symmetries]{Orbifold Euler characteristic and fixed point sets\\ of symmetries} \label{sect:orbifold}

For a topological space $X$ with an action of a finite group $H$, its {\em orbifold Euler characteristic}
is defined by (see, e.g., \cite{AS,HH})
\begin{gather}\label{eqn:chi_orb}
{\chi}^{\rm orb}(X,H)=\frac{1}{\vert H\vert}\sum_{{(g,h)\in H^2:}\atop{gh=hg}}\chi\big(X^{\langle g,h\rangle}\big).
\end{gather}
Here $X^{\langle g,h\rangle}$ is the fixed point set of the subgroup of $H$ generated by~$g$ and~$h$, i.e.,
$X^{\langle g,h\rangle}=\{x\in X\colon gx=hx=x\}$, $\chi(\cdot)$ is the ``additive'' Euler characteristic defined as the alternating sum of the ranks of the cohomology groups with compact support. (One can show that ${\chi}^{\rm orb}(X,H)$ is an integer.) The {\em reduced orbifold Euler characteristic} is
\[
{\overline{\chi}}^{ \rm orb}(X,H)={\chi}^{\rm orb}(X,H)-{\chi}^{\rm orb}({\rm pt},H),
\]
where ${\rm pt}$ is the one point set with the unique action of~$H$. (If the group $H$ is abelian,
${\chi}^{\rm orb}({\rm pt},H)=\vert H\vert.$)

The orbifold Euler characteristic ${\chi}^{\rm orb}(X,H)$ is an additive invariant of $H$-spaces, i.e., spaces with an action of the group~$H$. The universal additive invariant of $H$-spaces is the equivariant Euler characteristic with values in the Burnside ring $A(H)$ of the group $H$ (see, e.g., \cite[Section~3]{RMS}). Therefore the orbifold Euler characteristic of $H$-spaces is the reduction of the equivariant one under a group homomorphism $A(H)\to{\mathbb Z}$. One can speculate that the symmetry property (coincidence up to sign) for the reduced orbifold Euler characteristics of the Milnor fibres of BHHT-dual pairs can be deduced from the non-abelian Saito duality. The results of \cite{EG-IMRN} imply that this is really the case if
\[
 \chi^{\rm orb}\big(G_f\rtimes S/H\rtimes T, G\rtimes S\big)=
 \chi^{\rm orb}\big(G_{\widetilde{f}}\rtimes S/\widetilde{H}\rtimes T,
 \widetilde{G}\rtimes S\big)
\]
for a subgroup $T$ of $S$ and for subgroups $H$ and $G$ of $G_f$ (with special properties).
Unfortunately, it is unclear how to prove this equation. In \cite{EG-1811} it was proved for $H=\{e\}$
(and thus $\widetilde{H}=G_{\widetilde{f}}$).

If $H$ is a subgroup of a finite group $K$, one has the {\em induction operation} $\operatorname{Ind}_H^K$ which converts $H$-spaces to $K$-spaces. For an $H$-space $X$, the space $\operatorname{Ind}_H^K X$ is the quotient of the Cartesian product $K\times X$ by the (right) action of the group $H$ defined by $(g,x)*h=\big(gh, h^{-1}x\big)$ ($g\in K$, $x\in X$, $h\in H$). The action of the group $K$ on $\operatorname{Ind}_H^K X$ is defined in the natural way: $g'*(g,x)=(g'g,x)$. One has the following important property of the orbifold Euler characteristic:
\[
{\chi}^{\rm orb}\big(\operatorname{Ind}_H^K X, K\big)={\chi}^{\rm orb}(X, H)
\]
(see \cite[Theorem~1]{GLM}).

The computation of the orbifold Euler characteristic ${\chi}^{\rm orb}(V_f, G \rtimes S)$ of the Milnor fibre of an invertible polynomial $f$ (in $n$ variables) with an action of a group $G\rtimes S$ ($G\subset G_f$, $S\subset S_n$) will be based on a decomposition of $V_f$ into its intersections with certain unions of the coordinate tori. For a subset $I\subset I_0=\{1, 2, \dots, n\}$, let
\[
{\mathbb C}^I:=\big\{\underline{x}=(x_1, \dots, x_n)\in {\mathbb C}^n\colon x_i=0 \text{ for }i\notin I\big\}
\]
and let
\[
({\mathbb C}^*)^I:=\big\{\underline{x}=(x_1, \dots, x_n)\in {\mathbb C}^n\colon x_i\neq 0 \text{ for }i\in I, x_i=0 \text{ for }i\notin I\big\} .
\]
One has
\[
{\mathbb C}^n=\coprod_{I\subset I_0} ({\mathbb C}^*)^I.
\]
Let $f^I$ be the restriction of the polynomial $f$ to ${\mathbb C}^I$, and let $V_f^I=V_f \cap ({\mathbb C}^*)^I$.
Each torus $({\mathbb C}^\ast)^I$ is invariant with respect to the action of the group $G_f$.
Let $G_f^I:=\big\{ \underline{\lambda} \in G_f \colon \underline{\lambda} \underline{x} =
\underline{x} \mbox{ for } \underline{x} \in ({\mathbb C}^\ast)^I \big\}$ be the isotropy group of the action of $G_f$ on $({\mathbb C}^\ast)^I$.

The group $S$ acts on the set $2^{I_0}$ of subsets of $I_0$. One can represent the space ${\mathbb C}^n$ as the disjoint unions
\[
{\mathbb C}^n=\coprod_{{\mathcal J}\in 2^{I_0}/S}\coprod_{J\in{\mathcal J}}({\mathbb C}^*)^J.
\]
The union of tori $\coprod_{J\in{\mathcal J}}({\mathbb C}^*)^J$ is invariant with respect to the action of the group $G \rtimes S$. Therefore
\[
{\chi}^{\rm orb}(V_f, G \rtimes S)=\sum_{{\mathcal J}\in 2^{I_0}/S}{\chi}^{\rm orb}\bigg(\coprod_{J\in{\mathcal J}}V_f^J, G \rtimes S\bigg) .
\]
For a subset $I\subset I_0$, let $S^I\subset S$ be the isotropy subgroup of $I$ for the $S$-action on $2^{I_0}$. Let $\overline{I} := I_0 \setminus I$ be the complement of $I$. One has $S^{\overline{I}}=S^I$.
One can see that, for a representative $I$ of an $S$-orbit ${\mathcal J}$, one has
\[
\coprod_{J\in{\mathcal J}}V_f^J=\operatorname{Ind}_{G\rtimes S^I}^{G\rtimes S} V_f^I
\]
(as a $G\rtimes S$-set). Therefore
\[
{\chi}^{\rm orb}\bigg(\coprod_{J\in{\mathcal J}}V_f^J,G \rtimes S\bigg)=
{\chi}^{\rm orb}\big(\operatorname{Ind}_{G\rtimes S^I}^{G\rtimes S} V_f^I, G\rtimes S\big)=
{\chi}^{\rm orb}\big(V_f^I, G\rtimes S^I\big).
\]

A polynomial $f$ is invertible if and only if it is the (Sebastiani--Thom) sum of ``atomic'' polynomials
in different (non-intersecting) sets of variables of one of the forms:
\begin{enumerate}\itemsep=0pt
 \item[1)] $x_1^{p_1}x_2+x_2^{p_2}x_3+\dots +x_{m-1}^{p_{m-1}}x_m+x_m^{p_m}$, $m\ge 1$ ({\em chain type});
 \item[2)] $x_1^{p_1}x_2+x_2^{p_2}x_3+\dots +x_{m-1}^{p_{m-1}}x_m+x_m^{p_m}x_1$, $m\ge 2$ ({\em loop type}).
\end{enumerate}
This classification appeared first in \cite{Kreuzer} with a reference to proofs in~\cite{KS}. Sometimes
(for example in~\cite{Krawitz-thesis}) one also distinguishes the so-called {\em Fermat type}:
$x_1^{p_1}$. Here we consider it as a~special case of the chain type with $m=1$. (There are some reasons to
consider it as a special case of the loop type with $m=1$ as well, writing it as $x_1^{p_1-1}x_1$.)

Let $f$ be an invertible polynomial and let $S$ be a permutation group preserving~$f$. An element~$\sigma$
of $S$ respects the decomposition of $f$ into atomic polynomials and sends each of them into an isomorphic one. For an atomic summand $f_{\alpha}$ of $f$, let~$N$ be the minimal power of~$\sigma$ which sends $f_{\alpha}$ to itself. One may have the following two (somewhat different) situations. First, the action of $\sigma^N$ on the set of variables of $f_{\alpha}$ may be trivial. This always happens if~$f_{\alpha}$ is of chain type. If $f_{\alpha}$ is of loop type, the action of $\sigma^N$ on the set of its variables may be non-trivial. A non-trivial automorphism of a loop can be a {\em rotation}. This means the following. The length~$m$ of the loop $f_{\alpha}=x_1^{p_1}x_2+\dots +x_m^{p_m}x_1$ is divisible by $\ell$, $0<\ell<m$, $m=k\ell$, the sequence $p_1, p_2, \dots, p_{k\ell}$ is $\ell$-periodic, that is, $p_{i+\ell}=p_i$, where the index $i$ is considered modulo $k\ell$, and the automorphism sends the variable $x_i$ to the variable $x_{i+s\ell}$ with $0<s<k$. Another option for a non-trivial automorphism is a {\em flip}. This means that there exists an index $q$ such that the automorphism sends the variable~$x_i$ to the variable $x_{q-i}$. Such an automorphism exists if and only if all the exponents $p_i$ are equal to~1. In this case, the polynomial $f_{\alpha}$ has either a~non-isolated critical point at the origin, or a non-degenerate
one (i.e., its Hessian is different from zero), depending on the parity of the length~$m$. We exclude flips from consideration, i.e., assume that~$\sigma^N$ is a~rotation.

For a computation of the orbifold Euler characteristic ${\chi}^{\rm orb}(V_f, G \rtimes S)$ with the use of
equation~(\ref{eqn:chi_orb}), one has to consider mutual fixed point sets $V_f^{\langle g,h\rangle}$ of pairs of
commuting elements $g, h\in G \rtimes S$. Here we shall consider the fixed point set
$(V_f^I)^{\langle g\rangle}$ ($I\subset I_0$) of an element $g=(\underline{\lambda}, \sigma)$, $\sigma \in S^I$,
and give a condition for it to be non-empty. First we consider the case $I=I_0$.

For an element $\sigma\in S$, let $\eta=(i_1,\dots, i_r)$ be a cycle in $\sigma$. For an element
$\underline{\lambda}\in G_f$, the {\em cycle product} of $\underline{\lambda}$ corresponding to $\eta$ is
$\lambda_{i_1}\cdots \lambda_{i_r}$. Let $\underline{x}=(x_1, x_2,\dots, x_n)\in ({\mathbb C}^*)^n$ be a fixed point of $g=(\underline{\lambda}, \sigma)\in G \rtimes S$. This means that
\begin{gather}\label{eqn:fixed_point}
(\underline{\lambda}, \sigma)\underline{x}=\underline{\lambda}\sigma(\underline{x})=\underline{x} ,
\end{gather}
where $\sigma(\underline{x})=(x_{\sigma^{-1}(1)}, x_{\sigma^{-1}(2)},\dots, x_{\sigma^{-1}(n)})$.
For a cycle $\eta=(i_1,\dots, i_r)$ in $\sigma$, equation~(\ref{eqn:fixed_point}) means that
\begin{gather}\label{eqn:cycle_fixed_point}
(\lambda_{i_1}x_{i_r}, \lambda_{i_2}x_{i_1},\dots, \lambda_{i_r}x_{i_{r-1}})=(x_{i_1}, x_{i_2},\dots, x_{i_r}) .
\end{gather}
Here $x_{i_j} \neq 0$, $j=1, \dots , r$, and therefore a solution of (\ref{eqn:cycle_fixed_point}) exists if and only if $\lambda_{i_1} \cdots \lambda_{i_r}=1$, i.e., if the corresponding cycle product is equal to~1. One can take, e.g.,
\[
(x_{i_1}, \dots , x_{i_r})=
(\lambda_{i_1}, \lambda_{i_1}\lambda_{i_2}, \dots , \lambda_{i_1} \cdots \lambda_{i_{r-1}},1).
\]
This implies that the fixed point set $\big(({\mathbb C}^*)^n\big)^{\langle (\underline{\lambda},\sigma) \rangle}$ is non-empty if and only if, for all cycles of~$\sigma$, the cycle products of $\underline{\lambda}$ are equal to~1.

For an element $\sigma \in S$, let $\ell$ be the number of cycles of the permutation $\sigma$.

\begin{Definition} The {\em cycle homomorphism} $C_\sigma$ is the map from $G_f$ to $({\mathbb C}^*)^\ell$ which sends an element $\underline{\lambda} \in G_f$ to the collection of the cycle products of $\underline{\lambda}$.
\end{Definition}

The discussion above means that the fixed point set $\big(({\mathbb C}^*)^n\big)^{\langle (\underline{\lambda},\sigma) \rangle}$
is non-empty if and only if $\underline{\lambda} \in \operatorname{Ker} C_\sigma$.
In this case one has $\dim \big({\mathbb C}^n\big)^{\langle (\underline{\lambda},\sigma) \rangle}=\dim \big({\mathbb C}^n\big)^{\langle \sigma \rangle}=\ell$.

\begin{Definition} For an element $\sigma \in S$, the {\em shift homomorphism} $A_\sigma$ is the map from $G_f$ to itself defined by
\[
A_\sigma(\underline{\lambda}) = \underline{\lambda}(\sigma(\underline{\lambda}))^{-1}.
\]
\end{Definition}

\begin{Remark} \label{rem:comm}
Two elements $(\underline{\lambda},\sigma)$ and $(\underline{\lambda}', \sigma')$ commute if and only if
$\sigma \sigma'=\sigma' \sigma$ and $\underline{\lambda}\sigma(\underline{\lambda}') =
\underline{\lambda}' \sigma'(\underline{\lambda})$. The latter condition is equivalent to
$A_{\sigma'}(\underline{\lambda})=A_\sigma(\underline{\lambda}')$.
\end{Remark}

\begin{Proposition} \label{prop:C=A}
One has
\[
\operatorname{Ker} C_\sigma = \operatorname{Im} A_\sigma.
\]
\end{Proposition}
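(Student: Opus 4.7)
The plan is to establish the two inclusions separately. The inclusion $\operatorname{Im} A_\sigma \subseteq \operatorname{Ker} C_\sigma$ is immediate: if $\underline{\mu} = A_\sigma(\underline{\lambda}) = \underline{\lambda}\sigma(\underline{\lambda})^{-1}$, then $\mu_j = \lambda_j/\lambda_{\sigma^{-1}(j)}$, and along any cycle $\eta = (i_1, \dots, i_r)$ of~$\sigma$ the cycle product $\prod_t \mu_{i_t}$ telescopes to~$1$.

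For the reverse inclusion I would take $\underline{\mu} \in \operatorname{Ker} C_\sigma$ and first drop the requirement that the preimage lie in~$G_f$: the discussion preceding the definition of~$C_\sigma$ already produces an element $\underline{\lambda}^{(0)} \in ({\mathbb C}^*)^n$ with $A_\sigma\big(\underline{\lambda}^{(0)}\big) = \underline{\mu}$ by propagating along each cycle of~$\sigma$ from an arbitrary initial value. The obstruction is that $\underline{\lambda}^{(0)}$ need not lie in~$G_f$. I would correct it by a tuple $\underline{c} = (c_1,\dots,c_n)$ with $c_j$ depending only on the $\sigma$-cycle of~$j$; such $\underline{c}$ lies in $\operatorname{Ker} A_\sigma$, so $A_\sigma\big(\underline{\lambda}^{(0)} \cdot \underline{c}\big) = \underline{\mu}$ still holds, and the task is to choose $\underline{c}$ so that $\underline{\lambda}^{(0)} \cdot \underline{c} \in G_f$. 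Writing $d_k$ for the common value of $c_j$ on the cycle $\eta_k$ ($k = 1, \dots, \ell$) and setting $\widetilde{E}_{ik} := \sum_{j \in \eta_k} E_{ij}$, this reduces to the system $\prod_k d_k^{\widetilde{E}_{ik}} = r_i^{-1}$, where $r_i := \prod_j \big(\lambda^{(0)}_j\big)^{E_{ij}}$ measures the defect of $\underline{\lambda}^{(0)}$ from belonging to~$G_f$.

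Because $\sigma$ preserves~$f$, it permutes the monomials of~$f$ by some $\tau \in S_n$ satisfying the equivariance $E_{\tau(i),k} = E_{i,\sigma(k)}$; consequently $P_\tau = E P_\sigma E^{-1}$, so $\tau$ has the same cycle type as~$\sigma$ (in particular the same number~$\ell$ of cycles). This equivariance, together with $\underline{\mu} \in G_f$, gives $r_{\tau(i)} = r_i$ and $\widetilde{E}_{\tau(i),k} = \widetilde{E}_{ik}$ by short direct computations, so the $n$ equations collapse to an $\ell \times \ell$ system indexed by the $\tau$-cycles. The crucial (and really only substantive) step is then to verify that $\det \widetilde{E} \neq 0$: if $\widetilde{E} v = 0$ for some $v \in {\mathbb Q}^\ell \setminus \{0\}$, its $\sigma$-invariant lift $\tilde{v} \in {\mathbb Q}^n$ (assigning the value~$v_k$ to every index in~$\eta_k$) satisfies $E\tilde{v} = 0$, contradicting $\det E \neq 0$. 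Since ${\mathbb C}^*$ is divisible, the non-singular integer matrix~$\widetilde{E}$ induces a surjection $({\mathbb C}^*)^\ell \to ({\mathbb C}^*)^\ell$, producing the required~$\underline{c}$ and completing the proof.
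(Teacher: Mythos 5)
Your proof is correct, but it takes a genuinely different route from the paper. The paper proves only the easy inclusion $\operatorname{Im}A_\sigma\subset\operatorname{Ker}C_\sigma$ directly and then establishes equality by counting: it reduces to a single $\langle\sigma\rangle$-orbit of atomic summands via the Sebastiani--Thom decomposition and computes $|\operatorname{Ker}A_\sigma|$ and $|\operatorname{Ker}C_\sigma|$ explicitly, case by case for chains and for loops (with rotation), using the concrete description of $G_f$ for these atomic types. You instead construct a preimage directly: solve $A_\sigma(\underline{\lambda}^{(0)})=\underline{\mu}$ in $({\mathbb C}^*)^n$ by propagation along cycles (possible exactly because the cycle products of $\underline{\mu}$ are $1$), then correct $\underline{\lambda}^{(0)}$ into $G_f$ by an element constant on $\sigma$-cycles, which leads to a multiplicative linear system for the collapsed exponent matrix $\widetilde{E}_{ik}=\sum_{j\in\eta_k}E_{ij}$; the equivariance $E_{\tau(i),k}=E_{i,\sigma(k)}$ plus $\underline{\mu}\in G_f$ makes the $n$ equations collapse along $\tau$-cycles, and injectivity of $\widetilde{E}$ (lifted kernel vectors would kill $\det E$) together with divisibility of ${\mathbb C}^*$ gives solvability. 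Your argument uses only that $f$ has $n$ distinct monomials with $\det E\neq0$ and is $\sigma$-invariant, so it avoids the Kreuzer--Skarke chain/loop classification entirely (and does not even need non-degeneracy), and it is constructive. What the paper's computation buys is the explicit values of $|\operatorname{Ker}A_\sigma|$ and $|\operatorname{Im}A_\sigma|$ for chains and loops, which are reused later (in the proof of Proposition~\ref{prop:Euler}), so the case analysis is not wasted effort there. Two cosmetic points you should tidy: you apply $A_\sigma$ and speak of $\operatorname{Ker}A_\sigma$ for elements of $({\mathbb C}^*)^n$ that need not lie in $G_f$ (harmless, since the defining formula extends to a homomorphism of $({\mathbb C}^*)^n$, but say so, and note that the corrected element $\underline{\lambda}^{(0)}\underline{c}$ does lie in $G_f$, so $\underline{\mu}$ is indeed hit from $G_f$); and you use $\widetilde{E}$ both for the $n\times\ell$ matrix and for the reduced $\ell\times\ell$ system, so add the one-line remark that the reduced square matrix has the same row space as the $n\times\ell$ matrix (rows repeat along $\tau$-cycles), hence the same rank $\ell$, and is therefore nonsingular.
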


\begin{proof}
It is easy to see that $\operatorname{Im} A_\sigma \subset \operatorname{Ker} C_\sigma$. Indeed, for
$\underline{\mu}= A_\sigma(\underline{\lambda})$ and for a cycle $(i_1, \dots , i_r)$ of $\sigma$, one has
$\mu_{i_j} = \lambda_{i_j}(\lambda_{i_{j-1}})^{-1}$, where the index $j-1$ is considered modulo $r$, and therefore the cycle product of $A_\sigma(\underline{\lambda})$ is equal to
\[
\mu_{i_1} \cdots \mu_{i_r} =
\lambda_{i_1}(\lambda_{i_r})^{-1} \lambda_{i_2}(\lambda_{i_1})^{-1} \cdots \lambda_{i_r}(\lambda_{i_{r-1}})^{-1} = 1.
\]
We shall show that the order $|\operatorname{Im} A_\sigma|$ of the subgroup $\operatorname{Im} A_\sigma$ is equal to the order
$|\operatorname{Ker} C_\sigma|$.

Let $f= \bigoplus_{\alpha \in A} f_\alpha$ be the representation of the invertible polynomial $f$ as the
Sebastiani--Thom sum of atomic polynomials $f_\alpha$. The permutation $\sigma$ sends each $f_\alpha$ to
an isomorphic one. Let us regard $f$ as $\bigoplus_{\omega \in A/\langle \sigma \rangle} \bigoplus_{\alpha \in \omega} f_\alpha$ where the first sum is over all orbits of the action of the group~$\langle \sigma \rangle$ (generated by~$\sigma$) on the set $A$ of indices. Since $G_f=\bigoplus_{\omega \in A/\langle \sigma \rangle} G_{\bigoplus_{\alpha \in \omega} f_\alpha}$ and~$\sigma$ preserves each summand $\bigoplus_{\alpha \in \omega} f_\alpha$, it is sufficient to prove the statement for
one block $\bigoplus_{\alpha \in \omega} f_\alpha$ with $\omega \in A/\langle \sigma \rangle$. Thus we may assume that $f= \bigoplus_{i=1}^N f_i$ where $f_i$ are isomorphic atomic polynomials and $\sigma$ sends $f_i$ to $f_{i+1}$ (the indices are considered modulo~$N$). The proof is somewhat different for the cases when the $f_i$ are of chain type and when the $f_i$ are of loop type.

1) Let
\[
f_i=x_{i,1}^{p_1}x_{i,2}+x_{i,2}^{p_2}x_{i,3}+\dots +x_{i,m-1}^{p_{m-1}}x_{i,m}+x_{i,m}^{p_m}
\]
be of chain
type, $i=1, \dots, N$. The permutation $\sigma$ sends the variable $x_{i,j}$ to the variable~$x_{i+1,j}$. The order $\vert G_f\vert$ of the group $G_f$ is equal to $\vert G_{f_1}\vert^N$ (in the case under consideration $\vert G_{f_1}\vert=p_1\cdots p_m$). Let ${\underline{\lambda}}_i=(\lambda_{i,1},\dots, \lambda_{i,m})$, $i=1, \dots, N$. For ${\underline{\lambda}}_i\in G_{f_i}$, one has
\begin{gather}\label{eqn:chain_power}
\lambda_{i,j}=\lambda_{i,1}^{(-1)^{j-1}p_1\cdots p_{j-1}}
\end{gather}
for $j=1,\dots, m$. The kernel $\operatorname{Ker}A_{\sigma}$ consists of the elements
$({\underline{\lambda}}_1, \dots, {\underline{\lambda}}_N)\in G_{f_1}^N$ such that
${\underline{\lambda}}_1={\underline{\lambda}}_2=\cdots= {\underline{\lambda}}_N$. Therefore
$\vert\operatorname{Ker}A_{\sigma}\vert=\vert G_{f_1}\vert$ and $\vert\operatorname{Im}A_{\sigma}\vert=\vert G_{f_1}\vert^{N-1}$.
Because of~(\ref{eqn:chain_power}) the cycle relation for $\lambda_{i,j}$ follows from the cycle relation for
$\lambda_{i,1}$. Therefore the kernel $\operatorname{Ker}C_{\sigma}$ consists of
$({\underline{\lambda}}_1, \dots, {\underline{\lambda}}_N)\in G_{f_1}^N$ such that
$\lambda_{1,1}\cdot\lambda_{2,1}\cdots\lambda_{N,1}=1$. In the elements of $G_f$, the components $\lambda_{1,1}$,
$\lambda_{2,1}$,\dots, $\lambda_{N,1}$ are arbitrary roots of degree $\vert G_{f_1}\vert$ of $1$. Therefore
$\vert\operatorname{Ker}C_{\sigma}\vert=\vert G_{f}\vert/\vert G_{f_1}\vert=\vert G_{f_1}\vert^{N-1}=
\vert\operatorname{Im} A_{\sigma}\vert$.

2) Let
\[
f_i=x_{i,1}^{p_1}x_{i,2}+x_{i,2}^{p_2}x_{i,3}+\dots +x_{i,m-1}^{p_{m-1}}x_{i,m}+x_{i,m}^{p_m}x_{i,1}
\]
be of
loop type, $i=1, \dots, N$.
The permutation $\sigma$ sends the variable $x_{i,j}$ to the variable $x_{i+1,j}$ for $1\le i\le N-1$ and sends
the variable $x_{N,j}$ to the variable $x_{1,j+L}$, where $0\le L\le m-1$ (the index $j$ is considered modulo $m$).
In this case $p_{j+L}=p_j$. If $L=0$, the proof literally coincides with the one for chains. Otherwise let
$\ell=\gcd{(L,m)}$. The sequence $p_1, \dots, p_m$ of the exponents is $\ell$-periodic, i.e., $p_{j+\ell}=p_j$.
Let $k:=\frac{m}{\ell}$, $P:=p_1\cdots p_{\ell}$. One has $\vert G_{f_i}\vert=P^k-(-1)^{k\ell}$,
$\vert G_{f}\vert=\vert G_{f_i}\vert^N$. Let ${\underline{\lambda}}_i=(\lambda_{i,1},\dots, \lambda_{i,m})$,
$i=1, \dots, N$. For ${\underline{\lambda}}_i\in G_{f_i}$, one has
$\lambda_{i,j}=\lambda_{i,1}^{(-1)^{j-1}p_1\cdots p_{j-1}}$ for $j=1,\dots, m$; cf.\ (\ref{eqn:chain_power}).
Because of this the cycle relation for $\lambda_{i,j}$ follows from the cycle relation for $\lambda_{i,1}$.
The kernel $\operatorname{Ker}A_{\sigma}$ consists of the elements
$({\underline{\lambda}}_1, \dots, {\underline{\lambda}}_N)\in G_{f_1}^N$ such that
\[
\lambda_{1,1}=\lambda_{2,1}=\dots=\lambda_{N,1}=\lambda_{1,\ell+1}=\dots = \lambda_{N,\ell+1}=\lambda_{1,2\ell+1}
=\dots=\lambda_{N,(k-1)\ell+1} ,
\]
i.e., $\underline{\lambda}_i=\underline{\lambda}_1$ for $1\le i\le N$ and, in addition,
\begin{gather}\label{eqn:loop_ker}
\lambda_{1,1}=\lambda_{1,\ell+1}=\lambda_{1,2\ell+1}=\dots=\lambda_{1,(k-1)\ell+1} .
\end{gather}
Since $\lambda_{1,\ell+1}=(\lambda_{1,1})^{(-1)^{\ell}P}$, equation~(\ref{eqn:loop_ker}) means that
$(\lambda_{1,1})^{(-1)^{\ell}P}=\lambda_{1,1}$, so $(\lambda_{1,1})^{P-(-1)^{\ell}}=1$, i.e., $\lambda_{1,1}$
is an arbitrary root of degree $P-(-1)^{\ell}$ of $1$. Therefore
\begin{gather*}
 \vert\operatorname{Ker}A_{\sigma}\vert=P-(-1)^{\ell}
\end{gather*}
and
\[
 \vert\operatorname{Im}A_{\sigma}\vert=\frac{P^k-(-1)^{k\ell}}{P-(-1)^{\ell}}.
\]
The cycle relation for $\lambda_{1,1}$ is
\[
\lambda_{1,1}\cdots\lambda_{N,1} \lambda_{1,\ell+1} \cdots \lambda_{N,\ell+1} \lambda_{1,\ell(k-1)+1} \cdots
\lambda_{N,\ell(k-1)+1} =1,
\]
i.e.,
\[
(\lambda_{1,1}\cdots\lambda_{N,1} )^{1+(-1)^\ell P + (-1)^{2 \ell} P^2 + \cdots + (-1)^{(k-1)\ell} P^{k-1}}=1.
\]
This means that the product $\lambda_{1,1}\cdots\lambda_{N,1}$ is an arbitrary root of degree
\[
P^{k-1}+(-1)^\ell P^{k-2} + \cdots + (-1)^{(k-1)\ell} = \frac{P^k -(-1)^{k\ell}}{P-(-1)^\ell}
\]
of 1.
Since $\lambda_{1,1}, \dots , \lambda_{N,1}$ are arbitrary roots of degree $P^k-(-1)^{k\ell}$ of 1, this implies
that
\[
|\operatorname{Ker} C_\sigma | = \frac{\big(P^k-(-1)^{k\ell}\big)^N}{P-(-1)^\ell} = |\operatorname{Im} A_\sigma |. \tag*{\qed}
\]
 \renewcommand{\qed}{}
\end{proof}

Let $I$ be a non-empty subset of $I_0=\{1, \dots , n\}$ and let $\sigma \in S^I$. The discussion above about
a~condition for $(\underline{\lambda}, \sigma)$ to have a non-empty fixed point set in $({\mathbb C}^*)^n$ gives the following analogue for~$({\mathbb C}^*)^I$: the fixed point set $\big(({\mathbb C}^*)^I\big)^{\langle (\underline{\lambda}, \sigma) \rangle}$ is non-empty if and only if, for all cycles of~$\sigma$ contained in $I$, the cycle products of $\underline{\lambda}$
are equal to~1. In this case, the dimension of $\big({\mathbb C}^I\big)^{\langle (\underline{\lambda},\sigma) \rangle}$ is equal to the dimension of $\big({\mathbb C}^I\big)^{\langle \sigma \rangle}$ and is equal to the number of cycles contained in~$I$.

Let the subset $I$ be such that the number of monomials of the polynomial~$f^I$ is equal to~$|I|$.
Let $\overline{I}=I_0 \setminus I$ and $|I|=k$. By renumbering the coordinates, we can assume without loss of generality that $I=\{1, \dots , k\}$. Then the matrix $E=(E_{ij})$ is of the form
\[
E = \left( \begin{matrix} E_I & 0 \\ \ast & E_{\overline{I}} \end{matrix} \right),
\]
where $E_I$ and $E_{\overline{I}}$ are square matrices of sizes $k \times k$ and $(n-k) \times (n-k)$ respectively, and $E_I$ is the matrix corresponding to $f^I$. Since $\det E \neq 0$, it follows that $\det E^I \neq 0$. Moreover, $f^I$ also has an isolated critical point at the origin, see, e.g., \cite[Proposition~5]{ET}. This implies that $f^I$ is an invertible polynomial.

\begin{Proposition} \label{prop:fixed_point} In the situation described above, the fixed point set $\big(({\mathbb C}^*)^I\big)^{\langle (\underline{\lambda}, \sigma) \rangle}$
is non-empty if and only if $\underline{\lambda} \in \operatorname{Ker} C_\sigma + G_f^I$.
\end{Proposition}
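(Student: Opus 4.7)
The plan is to reduce this proposition to Proposition~\ref{prop:C=A} applied to the invertible sub-polynomial $f^I$ together with the restricted permutation $\sigma_I:=\sigma\vert_I$, via a short exact sequence relating $G_f$, $G_f^I$, and $G_{f^I}$.

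To begin, I would restate the condition in more convenient form. By the paragraph preceding the proposition, the fixed point set $\big(({\mathbb C}^*)^I\big)^{\langle(\underline\lambda,\sigma)\rangle}$ is non-empty if and only if the cycle products of $\underline\lambda$ along all cycles of $\sigma$ contained in $I$ equal $1$. Since $\sigma\in S^I$, these cycles are precisely the cycles of $\sigma_I$. Let $C^I_\sigma\colon G_f\to({\mathbb C}^*)^{\ell_I}$ denote the partial cycle homomorphism recording only the $I$-cycle products, so the fixed point condition is $\underline\lambda\in\operatorname{Ker} C^I_\sigma$.

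Next, I would set up the short exact sequence
\[
1\longrightarrow G_f^I\longrightarrow G_f\stackrel{\pi}{\longrightarrow} G_{f^I}\longrightarrow 1,
\]
where $\pi$ is the restriction $\underline\lambda\mapsto(\lambda_1,\dots,\lambda_k)$. Left exactness is the very definition of $G_f^I$. Surjectivity uses the block-triangular form of $E$: given $\underline\lambda_I\in G_{f^I}$, one must solve the $\overline I$-row equations for $\underline\lambda_{\overline I}$, which is possible because the endomorphism of the divisible group $({\mathbb C}^*)^{\overline I}$ defined by the integer matrix $E_{\overline I}$ (with $\det E_{\overline I}\ne 0$, since $\det E=\det E_I\cdot\det E_{\overline I}\ne 0$) is surjective. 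Direct verification from the formulas for the $S$-action shows $\pi\circ\sigma=\sigma_I\circ\pi$, hence $\pi\circ A_\sigma=A_{\sigma_I}\circ\pi$ and $C^I_\sigma=C_{\sigma_I}\circ\pi$.

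The inclusion $\operatorname{Ker} C_\sigma+G_f^I\subseteq\operatorname{Ker} C^I_\sigma$ is immediate: the $I$-components of any element of $G_f^I$ are $1$, so it contributes trivially to all $I$-cycle products. For the reverse inclusion, take $\underline\lambda\in\operatorname{Ker} C^I_\sigma$, so $\pi(\underline\lambda)\in\operatorname{Ker} C_{\sigma_I}$. Apply Proposition~\ref{prop:C=A} to the invertible polynomial $f^I$ and the permutation $\sigma_I$ to write $\pi(\underline\lambda)=A_{\sigma_I}(\underline\mu_I)$ for some $\underline\mu_I\in G_{f^I}$, then lift $\underline\mu_I$ to $\underline\mu\in G_f$ using surjectivity of $\pi$. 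From $\pi\circ A_\sigma=A_{\sigma_I}\circ\pi$ one gets $\pi(A_\sigma(\underline\mu))=\pi(\underline\lambda)$, whence $\underline\lambda\cdot A_\sigma(\underline\mu)^{-1}\in\operatorname{Ker}\pi=G_f^I$, while $A_\sigma(\underline\mu)\in\operatorname{Im} A_\sigma=\operatorname{Ker} C_\sigma$ by Proposition~\ref{prop:C=A}. This yields the required decomposition.

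The main technical point I expect to encounter is verifying surjectivity of $\pi$ and the compatibility $\pi\circ\sigma=\sigma_I\circ\pi$; once these are in hand, the proof is a short diagram chase combining Proposition~\ref{prop:C=A} applied to $f^I$ with the fact that $G_f^I$ lies in the kernel of every partial $I$-cycle homomorphism.
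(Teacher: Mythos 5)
Your proof is correct, but it takes a genuinely different route from the paper's. The paper reduces to a single $\sigma$-orbit of atomic summands and then argues case by case: for loop blocks the statement collapses to the earlier discussion of cycle products (there $I$ is forced to be the whole block and $G_f^I=\{1\}$), while for chain blocks the authors write $I=I_r$ and explicitly construct, from a $\underline{\lambda}$ satisfying the $I$-cycle relation, an element $\widetilde{\underline{\lambda}}\in\operatorname{Ker}C_\sigma$ with $\underline{\lambda}\widetilde{\underline{\lambda}}^{-1}\in G_f^{I_r}$ via an explicit choice of roots of unity. You instead prove the identity $\operatorname{Ker}C^I_\sigma=\operatorname{Ker}C_\sigma+G_f^I$ structurally: the exact sequence $1\to G_f^I\to G_f\stackrel{\pi}{\to}G_{f^I}\to 1$ (surjectivity of $\pi$ from $\det E_{\overline{I}}\neq 0$ and divisibility of ${\mathbb C}^*$, using the block-triangular shape of $E$), the compatibilities $\pi\circ A_\sigma=A_{\sigma_I}\circ\pi$ and $C^I_\sigma=C_{\sigma_I}\circ\pi$, and then Proposition~\ref{prop:C=A} applied to the invertible polynomial $f^I$ with the restricted permutation $\sigma_I$. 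This buys a uniform diagram chase with no chain/loop case distinction (that analysis is outsourced to Proposition~\ref{prop:C=A} for $f^I$), whereas the paper's argument is more explicit but repeats a type-by-type computation. Two small points you leave implicit and should record: that Proposition~\ref{prop:C=A} is indeed applicable to $(f^I,\sigma_I)$, i.e., $f^I$ is a non-degenerate invertible polynomial preserved by $\sigma_I$ (this is exactly what the paragraph before the proposition establishes) and that the standing ``no flip'' assumption passes to $\sigma_I$ acting on $f^I$ --- which it does, since any loop summand of $f^I$ is a loop summand of $f$ whose whole $\sigma$-orbit lies in $I$, and there $\sigma_I$ acts exactly as $\sigma$. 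Neither of these is a genuine gap; the argument stands.
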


\begin{proof}Let $f=\bigoplus_{\omega \in A/\langle \sigma \rangle} \bigoplus_{\alpha \in \omega} f_\alpha$
be the decomposition of $f$ into the Sebastiani--Thom sum of polynomials $f_\alpha$ of atomic type.
One has $G_f=\bigoplus_{\omega \in A/\langle \sigma \rangle} G_{\bigoplus_{\alpha \in \omega} f_\alpha}$.
The subset $I$ is the disjoint union of the subsets $I_\omega$ where $I_\omega$
is the intersection of $I$ with the set of the indices corresponding to the coordinates in
$\bigoplus_{\alpha \in \omega} f_\alpha$. Since, for $\omega_1 \neq \omega_2$, the subsets $I_{\omega_1}$ and $I_{\omega_2}$ are disjoint, one has
$G_{\bigoplus_{\alpha \in \omega_1} f_\alpha} \subset G_f^{I_{\omega_2}}$. Therefore it is sufficient to prove
the statement for $f=\bigoplus_{i=1}^N f_i$ and $\sigma$ sending $f_i$ to $f_{i+1}$. If the polynomials
$f_i$, $i=1, \dots , N$, are of loop type, then $I$ consists of all the coordinates and the statement says
that $\underline{\lambda}$ has a non-empty fixed point set in the maximal torus if and only if
$\underline{\lambda} \in \operatorname{Ker} C_\sigma$, i.e., $\underline{\lambda}$ satisfies the cycle relation(s),
and $G_f^I=\{ 1 \}$.

If
\[ f_i= x_{i,1}^{p_1}x_{i,2} + x_{i,2}^{p_2}x_{i,3} + \cdots + x_{i,m-1}^{p_{m-1}}x_{i,m} +
x_{i,m}^{p_m}, \qquad i=1, \dots , N,
\]
are of chain type, then $I=I_r= \coprod_{i=1}^N \{ (i,r), \dots , (i,m) \}$ where $1 \leq r \leq m$. An element
$\underline{\lambda}=(\lambda_{i,j}) \in ({\mathbb C}^*)^{mN}$ belongs to $G_f$ if and only if $\lambda_{i,1}$ is a root
of degree $p_1 \cdots p_m$ of 1 and $\lambda_{i,j}=(\lambda_{i,1})^{(-1)^{j-1}p_1 \cdots p_{j-1}}$ for
$j=2, \dots , m$. In particular, $\lambda_{i,j}$ can be an arbitrary root of degree $p_j \cdots p_m$ of 1.
Since, for $j>r$, $\lambda_{i,j}=(\lambda_{i,r})^{(-1)^{j-r}p_r \cdots p_{j-1}}$, the cycle relation
$\lambda_{1,j} \lambda_{2,j} \cdots \lambda_{m,j}=1$ follows from the cycle relation
$\lambda_{1,r} \lambda_{2,r} \cdots \lambda_{m,r}=1$. In this case, one can write
$\lambda_{i,r}= \exp \frac{2 \pi \sqrt{-1} n_i}{p_r \cdots p_m}$ where $\sum_{i=1}^N n_i = 0$. Let
$\widetilde{\underline{\lambda}}$ be the element of $G_f$ defined by
$\widetilde{\lambda}_{i,1} = \exp \frac{2 \pi \sqrt{-1} n_i}{p_1 \cdots p_m}$. One has
$\widetilde{\underline{\lambda}} \in \operatorname{Ker} C_\sigma$
and $\underline{\lambda} \widetilde{\underline{\lambda}}^{-1} \in G_f^{I_r}$. This proves the statement.
\end{proof}

As above, let $I$ be a non-empty subset of $I_0$ such that the number of monomials of the polynomial $f^I$ is equal
to $|I|$ and let $g=(\underline{\lambda}, \sigma)$, $\sigma \in S^I$, be such that the fixed point set
$\big(({\mathbb C}^*)^I\big)^{\langle (\underline{\lambda}, \sigma) \rangle}$ is non-empty.

\begin{Proposition} \label{prop:Euler} One has
\begin{gather} \label{eq:Euler}
\chi\big(\big(V_f^I\big)^{\langle (\underline{\lambda}, \sigma) \rangle}\big) =
(-1)^{\dim \left({\mathbb C}^I\right)^{\langle \sigma \rangle} -1} \frac{|\operatorname{Ker} A_\sigma|}{\big|\operatorname{Ker} A_\sigma \cap G_f^I\big|}.
\end{gather}
\end{Proposition}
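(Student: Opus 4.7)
The plan is to exploit the conjugation structure in $G_f\rtimes S$ to reduce to the case $\underline{\lambda}=1$, decompose $f$ along $\langle\sigma\rangle$-orbits of atomic summands, identify the restriction of $f$ to the $\sigma$-fixed subspace as a non-zero scalar multiple of an invertible polynomial in $\ell$ variables, and then invoke the standard formula for the Euler characteristic of its Milnor fibre intersected with the maximal torus.

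First I would reduce to the case $\underline{\lambda}=1$. By Propositions~\ref{prop:C=A} and~\ref{prop:fixed_point}, the non-emptiness hypothesis lets us write $\underline{\lambda}=A_\sigma(\underline{\nu})\cdot\underline{\nu}'$ with $\underline{\nu}\in G_f$ and $\underline{\nu}'\in G_f^I$. Because $\underline{\nu}'$ acts trivially on $({\mathbb C}^*)^I$ and $\sigma\in S^I$ preserves this torus, $(\underline{\lambda},\sigma)$ and $(A_\sigma(\underline{\nu}),\sigma)$ have the same fixed set on $V_f^I$. A direct computation in the semidirect product yields $(A_\sigma(\underline{\nu}),\sigma)=\underline{\nu}\cdot(1,\sigma)\cdot\underline{\nu}^{-1}$; since $\underline{\nu}\in G_f$ preserves $V_f^I$, conjugation by $\underline{\nu}$ identifies this fixed set with that of $(1,\sigma)$, so they have the same Euler characteristic and we may assume $\underline{\lambda}=1$.

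Next, with $\underline{\lambda}=1$, the fixed subspace is the linear subspace $({\mathbb C}^I)^{\langle\sigma\rangle}$ of complex dimension $\ell$, parametrised by one cycle-coordinate per cycle of $\sigma|_I$. I would decompose $f=\bigoplus_\omega\bigoplus_{\alpha\in\omega}f_\alpha$ into $\langle\sigma\rangle$-orbits of atomic summands in disjoint variable sets, so that the Milnor fibre $V_f^I\cap({\mathbb C}^I)^{\langle\sigma\rangle}$ decomposes as a Sebastiani--Thom join whose Euler characteristic factorises multiplicatively (with a single global sign) across the orbits, reducing the proposition to a single orbit. For a chain orbit of $N$ summands, collapsing columns $x_{1,j}=\dots=x_{N,j}=y_j$ and summing gives $f|_{\mathrm{fix}}=N\cdot(y_r^{p_r}y_{r+1}+\dots+y_m^{p_m})$ when $I$ is a tail $\{r,\dots,m\}\times\{1,\dots,N\}$. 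For a loop orbit of $N$ summands with internal rotation parameter $L$ and $\ell=\gcd(L,m)$, the cycles of $\sigma$ have length $Nk$ (where $k=m/\ell$), and using the $\ell$-periodicity of the exponents one checks that $f|_{\mathrm{fix}}$ equals $Nk\cdot(y_1^{p_1}y_2+\dots+y_\ell^{p_\ell}y_1)$. In either case $f|_{\mathrm{fix}}$ is a non-zero scalar multiple of an invertible polynomial $\tilde f$ in $\ell$ variables, and rescaling reduces the computation to that of $V_{\tilde f}\cap({\mathbb C}^*)^\ell$.

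The standard formula $\chi(V_{\tilde f}\cap({\mathbb C}^*)^\ell)=(-1)^{\ell-1}|G_{\tilde f}|$, proved by Sebastiani--Thom multiplicativity from the direct Fermat check $\chi(\{x^p=1\})=p$, then produces the sign $(-1)^{\ell-1}$ exactly as stated. It remains to verify $|G_{\tilde f}|=|\operatorname{Ker}A_\sigma|/|\operatorname{Ker}A_\sigma\cap G_f^I|$: in the chain/tail case $|\operatorname{Ker}A_\sigma|=p_1\cdots p_m$ by Proposition~\ref{prop:C=A} and $|\operatorname{Ker}A_\sigma\cap G_f^I|=p_1\cdots p_{r-1}$ (the elements of $\operatorname{Ker}A_\sigma$ acting trivially on the tail), matching $|G_{\tilde f}|=p_r\cdots p_m$; in the loop case $I=I_0$ forces the denominator to be $1$ and $|\operatorname{Ker}A_\sigma|=P-(-1)^\ell=|G_{\tilde f}|$ by Proposition~\ref{prop:C=A}. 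The main obstacle I anticipate is the loop case with non-trivial rotation: the interaction between the external permutation $i\mapsto i+1$ of summands and the internal rotation $j\mapsto j+L$ makes the cycle structure of $\sigma$ intricate, and one must verify in detail that the restriction is genuinely an invertible loop polynomial in $\ell$ variables (rather than a degenerate sum of collinear monomials) and that the counts from Proposition~\ref{prop:C=A} transfer correctly, while also checking that the allowed subsets $I$ with $|I|$ monomials force $I=I_0$ on each loop block (only chains admit non-trivial tails).
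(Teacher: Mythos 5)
Your proposal is correct and follows essentially the same route as the paper: decompose $f$ into $\langle\sigma\rangle$-orbits of atomic summands, reduce by Sebastiani--Thom multiplicativity to one orbit, identify the restriction of $f$ to the $\sigma$-fixed subspace as $N\big(y_r^{p_r}y_{r+1}+\dots+y_m^{p_m}\big)$ in the chain case and $kN\big(y_1^{p_1}y_2+\dots+y_\ell^{p_\ell}y_1\big)$ in the loop case, compute the torus Euler characteristic by the standard determinant formula, and match it with $|\operatorname{Ker}A_\sigma|/|\operatorname{Ker}A_\sigma\cap G_f^I|$ via the counts from the proof of Proposition~\ref{prop:C=A}. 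The one genuine variation is your uniform reduction to $\underline{\lambda}=1$: writing $\underline{\lambda}=A_\sigma(\underline{\nu})\underline{\nu}'$ with $\underline{\nu}'\in G_f^I$ (Propositions~\ref{prop:C=A} and~\ref{prop:fixed_point}) and conjugating by $(\underline{\nu},1)$ is valid and handles both cases at once, whereas the paper parametrises the fixed locus explicitly (with $\underline{\lambda}$ present) in the chain case and invokes \cite[Proposition~3]{EG-IMRN} only for loops; your version is a modest streamlining that stays inside the tools already established in the paper. One imprecision to fix: the formula $\chi\big(V_{\tilde f}\cap({\mathbb C}^*)^\ell\big)=(-1)^{\ell-1}\big|\det \tilde E\big|$ does not follow from ``Sebastiani--Thom multiplicativity from the Fermat check,'' since a chain or loop atomic polynomial is not a join of Fermat polynomials in disjoint variables; the correct justification is Varchenko's theorem \cite[Theorem~7.1]{Varch} (equivalently, the monomial substitution realising $V_{\tilde f}\cap({\mathbb C}^*)^\ell$ as a degree-$\big|\det\tilde E\big|$ cover of $\{z_1+\dots+z_\ell=1\}\cap({\mathbb C}^*)^\ell$), which is exactly what the paper cites. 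With that citation substituted, and noting (as you do) that on a loop block the only admissible $I$ is the full set of its variables while chains admit tails, your argument is complete.
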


\begin{proof}The Euler characteristic under consideration is the Euler characteristic of the Milnor fibre of the restriction of the function $f$ to $\big({\mathbb C}^I\big)^{\langle (\underline{\lambda}, \sigma) \rangle}$. Let $f=\bigoplus_{\omega \in A/\langle \sigma \rangle} \bigoplus_{\alpha \in \omega} f_\alpha$ be the decomposition of $f$ into atomic polynomials and let $I_\alpha$ be the set of indices of the variables in~$f_\alpha$. The fixed point set $\big({\mathbb C}^I\big)^{\langle (\underline{\lambda}, \sigma) \rangle}$ is the direct sum of the spaces $\big( \bigoplus_{\alpha \in \omega} {\mathbb C}^{I_\alpha} \big)^{\langle (\underline{\lambda}, \sigma) \rangle}$ over all~$\omega$ such that $I \cap \big( \coprod_{\alpha \in \omega} I_\alpha \big)$ is non-empty. The restriction of $f$ to $\big({\mathbb C}^I\big)^{\langle (\underline{\lambda}, \sigma) \rangle}$ is the Sebastiani--Thom sum of its restrictions to $\big( \bigoplus_{\alpha \in \omega} {\mathbb C}^{I_\alpha}\big)^{\langle (\underline{\lambda}, \sigma) \rangle}$.
Therefore its Milnor fibre is homotopy equivalent to the join of the Milnor fibres of the restrictions of $f$ to $\big( \bigoplus_{\alpha \in \omega} {\mathbb C}^{I_\alpha}\big)^{\langle (\underline{\lambda}, \sigma) \rangle}$ and its Euler characteristic is equal up to sign to the product of the corresponding Euler characteristics for
$\bigoplus_{\alpha \in \omega} {\mathbb C}^{I_\alpha}$. The groups whose orders are in the numerator and in the denominator
of (\ref{eq:Euler}) are direct products of the corresponding groups for
$I \cap \big( \coprod_{\alpha \in \omega} I_\alpha\big)$. Therefore it is sufficient to prove (\ref{eq:Euler})
for the polynomial $\bigoplus_{\alpha \in \omega} f_\alpha$ with $\omega \in A/\langle \sigma \rangle$. Thus, as in Proposition~\ref{prop:fixed_point}, we may assume that $f=\bigoplus_{i=1}^N f_i$ ($f_i$ are atomic) and $\sigma$ sends $f_i$ to $f_{i+1}$. Again we have to distinguish between two cases.

1) Let
\[ f_i= x_{i,1}^{p_1}x_{i,2} + x_{i,2}^{p_2}x_{i,3} + \cdots + x_{i,m-1}^{p_{m-1}}x_{i,m} + x_{i,m}^{p_m}, \qquad i=1, \dots , N,
\]
be of chain type. The permutation $\sigma$ sends the variable $x_{i,j}$ to the variable $x_{i+1,j}$. The subset~$I$ (invariant with respect to $\sigma$) is of the form
\[
\coprod_{i=1}^N \{(i,r),(i,r+1), \dots , (i,m) \}
\]
with $1 \leq r \leq m$. The fixed point set $\big(({\mathbb C}^*)^I\big)^{\langle (\underline{\lambda}, \sigma) \rangle}$ consists of points of the form
\[
(\underline{\lambda}_1 \underline{y}, \underline{\lambda}_1\underline{\lambda}_2 \underline{y}, \dots ,
\underline{\lambda}_1 \cdots \underline{\lambda}_{m-1}\underline{y} , \underline{y}),
\]
where $\underline{\lambda}_i \in G_{f_i}$ (see the notations in the proof of Proposition~\ref{prop:fixed_point}),
\[
\underline{y}=(y_r, \dots , y_m)=(x_{N,r}, \dots , x_{N,m}).
\]
Therefore the restriction of $f$ to this set is equal to
\[
N\big(y_r^{p_r}y_{r+1} +y_{r+1}^{p_{r+1}}y_{r+2}+ \cdots + y_{m-1}^{p_{m-1}}y_m + y_m^{p_m}\big).
\]
The Euler characteristic of the intersection of its Milnor fibre with the corresponding torus is equal up to sign (not depending on $\underline{\lambda}$) to the determinant of the matrix of exponents (see, e.g.,
\cite[Theorem~7.1]{Varch}), which in our case is equal to $p_r \cdots p_m$. The group $\operatorname{Ker} A_\sigma$ consists of the elements of the form $(\underline{\lambda}_1, \underline{\lambda}_1, \dots , \underline{\lambda}_1)$, $\underline{\lambda}_1 \in G_{f_1}$, and its order $|\operatorname{Ker} A_\sigma|$ is equal to $p_1 \cdots p_m$. The group $\operatorname{Ker} A_\sigma \cap G_f^I$ consists of the elements of the form $(\underline{\lambda}_1, \underline{\lambda}_1, \dots , \underline{\lambda}_1)$ with $\underline{\lambda}_1 \in G_{f_1}^{I \cap I_1}$. This means that $\lambda_{1,1}$ is an arbitrary root of degree $p_1 \cdots p_{r-1}$ of 1 and therefore the order $\big|\operatorname{Ker} A_\sigma \cap G_f^I\big|$ is equal to $p_1 \cdots p_{r-1}$.

2) Let
\[
f_i= x_{i,1}^{p_1}x_{i,2} + x_{i,2}^{p_2}x_{i,3} + \cdots + x_{i,m-1}^{p_{m-1}}x_{i,m} + x_{i,m}^{p_m}x_{i,1}, \qquad i=1, \dots , N,
\]
be of loop type, $m=k\ell$, $p_i=p_{i+\ell}$, the permutation $\sigma$ sends the variable $x_{i,j}$ to the variable $x_{i+1,j}$ for $1 \leq i \leq N-1$ and sends the variable $x_{N,j}$ to the variable $x_{1,j+s\ell}$, $\gcd(s,k)=1$, and the set $I$ consists of the indices of all the variables. (One can say that in this case $I=I_0$.) In \cite[Proposition~3]{EG-IMRN}, it was shown that the element $(\underline{\lambda},\sigma) \in G \rtimes S$
(with non-empty fixed point set $\big(({\mathbb C}^*)^{Nk\ell}\big)^{\langle (\underline{\lambda}, \sigma) \rangle}$) is conjugate in $G_f \rtimes S$ to the element $(1, \sigma)$ (in fact by an element of the form $(\underline{\mu},1)$, $\underline{\mu} \in G_f$). This means that the fixed point set
$\big({\mathbb C}^I\big)^{\langle (\underline{\lambda}, \sigma) \rangle}$ is obtained from $\big({\mathbb C}^I\big)^{\langle \sigma \rangle}$ by the translation by $\underline{\mu}$. This translation preserves $f$. The fixed point set $\big(({\mathbb C}^*)^{Nk\ell}\big)^{\langle \sigma \rangle}$ consists of the points $\underline{x}=(x_{i,j})$ with $x_{i,j}=x_{1,j}$ for $i=1, \dots , N$, $j=1, \dots , k\ell$ and $x_{1,j+\ell}=x_{1,j}$ (the index $j$ is considered modulo $k\ell$). Therefore, as coordinates on $\big(({\mathbb C}^*)^{Nk\ell}\big)^{\langle \sigma \rangle}$, one can take $y_j=x_{1,j}$ for $1 \leq j \leq \ell$ and the restriction of the polynomial $f$ to this subspace is equal to
\begin{gather} \label{eq:loop_fix}
kN \big(y_1^{p_1}y_2+y_2^{p_2}y_3 + \cdots + y_{\ell-1}^{p_{\ell-1}}y_\ell + y_\ell^{p_\ell} y_1\big).
\end{gather}
As in 1), the Euler characteristic of the intersection of its Milnor fibre with the maximal torus is equal up to sign (not depending on $\underline{\lambda}$) to $P-(-1)^\ell$ where $P=p_1 \cdots p_\ell$. We have
$|\operatorname{Ker} A_\sigma|=P-(-1)^\ell$ and $G_f^I=\{ 1 \}$ and therefore $\big|\operatorname{Ker} A_\sigma \cap G_f^I\big|=1$. Note that, for $\ell=1$, the polynomial~(\ref{eq:loop_fix}) is equal to $kNy_1^{p_1}y_1$ (i.e., is of Fermat type) and the equation for the Euler characteristic holds as well. This proves the statement up to sign. However the sign of the Euler characteristic of the Milnor fibre is determined by the dimension.
\end{proof}

\section{Symmetry for cyclic permutation groups} \label{sect:Symmetry}
Let $f$ be an invertible polynomial (in $n$ variables) and let $S\subset S_n$ be a subgroup of the group of
permutations of the coordinates preserving~$f$. Let $G$ be an $S$-invariant subgroup of $G_f$, and let the pair $\big(\widetilde{f}, \widetilde{G} \rtimes S\big)$ be the BHHT-dual to $(f,G \rtimes S)$.

\begin{Theorem}\label{theo:Main} If $S$ is a cyclic group satisfying the condition PC, then
 \begin{gather}\label{eqn:Main}
 {\overline{\chi}}^{ \rm orb}(V_f,G \rtimes S)=(-1)^n {\overline{\chi}}^{ \rm orb}\big(V_{\widetilde{f}},\widetilde{G} \rtimes S\big) .
 \end{gather}
\end{Theorem}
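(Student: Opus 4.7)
The plan is to evaluate both sides of \eqref{eqn:Main} via the coordinate-torus decomposition of Section~\ref{sect:orbifold} and to match the resulting stratum contributions under the involution $I\leftrightarrow\overline I$, with the global sign $(-1)^n$ extracted from PC. First apply
\[
\chi^{\rm orb}(V_f, G\rtimes S)=\sum_{[I]\in 2^{I_0}/S}\chi^{\rm orb}\big(V_f^I, G\rtimes S^I\big)
\]
together with its analogue for $\widetilde f$. Only strata with $f^I$ having exactly $|I|$ monomials contribute: on the others a free ${\mathbb C}^*$-action commutes with the $G\rtimes S^I$-action and kills all relevant Euler characteristics. By the block form of $E$ displayed before Proposition~\ref{prop:fixed_point}, this admissibility condition is preserved under $I\leftrightarrow\overline I$, and $\widetilde f{}^{\overline I}$ is then invertible with matrix $E_{\overline I}^T$; cyclicity of $S$ is inherited by $S^I=S^{\overline I}$.

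For each admissible stratum, unfold
\[
\chi^{\rm orb}\big(V_f^I, G\rtimes S^I\big)=\frac{1}{|G|\,|S^I|}\sum_{\substack{g_1,g_2\in G\rtimes S^I\\ g_1g_2=g_2g_1}}\chi\big(\big(V_f^I\big)^{\langle g_1,g_2\rangle}\big).
\]
Writing $g_i=(\underline\lambda_i,\sigma_i)$, cyclicity makes $\sigma_1\sigma_2=\sigma_2\sigma_1$ automatic, and Remark~\ref{rem:comm} reduces commutation to $A_{\sigma_2}(\underline\lambda_1)=A_{\sigma_1}(\underline\lambda_2)$. Proposition~\ref{prop:fixed_point} decides when the common fixed locus is non-empty, and a two-element extension of Proposition~\ref{prop:Euler} --- choosing a generator $\sigma$ of $\langle\sigma_1,\sigma_2\rangle$ --- expresses each $\chi$ as $\pm\,|\operatorname{Ker} A_\sigma|/|\operatorname{Ker} A_\sigma\cap G_f^I|$ weighted by $G$-character sums. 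The same machinery runs verbatim on the $\widetilde f$-side with $\overline I$, $\widetilde G$, and transposed matrix.

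To match terms, use the character duality $G_{\widetilde f}\simeq G_f^*$ to pair $(\underline\lambda_1,\underline\lambda_2)$ on the $f$-side with its dual $(\underline\mu_1,\underline\mu_2)$ on the $\widetilde f$-side, compatibly with the block decomposition of $E$. The chain/loop arithmetic in the proof of Proposition~\ref{prop:C=A} --- in which the expressions $P-(-1)^\ell$ and $(P^k-(-1)^{k\ell})/(P-(-1)^\ell)$ are manifestly exchanged under $E\mapsto E^T$ --- yields the numerical equality of the paired contributions. The product of the two Proposition~\ref{prop:Euler} signs is $(-1)^{\dim({\mathbb C}^I)^{\langle\sigma\rangle}+\dim({\mathbb C}^{\overline I})^{\langle\sigma\rangle}-2}=(-1)^{\dim({\mathbb C}^n)^{\langle\sigma\rangle}}$, which equals $(-1)^n$ by PC applied to $T=\langle\sigma\rangle$. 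Summing over matched pairs covers all strata with $\emptyset\neq I\neq I_0$; the residual contributions from the unpaired strata $I=I_0$ on each side combine with the difference of base-point terms $\chi^{\rm orb}({\rm pt},G\rtimes S)-(-1)^n\chi^{\rm orb}({\rm pt},\widetilde G\rtimes S)$ to yield \eqref{eqn:Main} after passage to $\overline\chi^{\rm orb}$, via a direct verification on the maximal-torus stratum.

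The hardest step is the matching. It amounts to a quantitative non-abelian refinement of the Berglund--Henningson count $|\widetilde G|=|G_f|/|G|$ at the level of every cyclic subgroup of $S$, carried out compatibly with the intersections $\operatorname{Ker} A_\sigma\cap G_f^I$ and their $\widetilde f$-duals, as well as with the $G$-character sums that encode the subgroup $G\subset G_f$. Cyclicity of $S$ is essential here, both through the conjugacy result \cite[Proposition~3]{EG-IMRN} used in Proposition~\ref{prop:Euler} on loop blocks and because a cyclic $S$ keeps the inventory of commuting pairs aligned with the dual enumeration; the exclusion of flips ensures that the entire computation stays within the chain/loop calculus of Proposition~\ref{prop:C=A}.
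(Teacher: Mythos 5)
Your skeleton matches the paper's: stratify $V_f$ by $S$-orbits of coordinate tori, reduce to ${\chi}^{\rm orb}\big(V_f^I,G\rtimes S^I\big)$ by induction, discard strata where $f^I$ has fewer than $|I|$ monomials, pair the stratum $I$ for $f$ with $\overline{I}$ for $\widetilde f$, and get the sign from PC applied to $\langle\sigma\rangle$ (your product-of-signs computation is equivalent to the paper's comparison). But the quantitative core of the theorem is missing, and the mechanism you propose for it would not work. First, the reduction of general commuting pairs: invoking ``a two-element extension of Proposition~\ref{prop:Euler}'' via a generator of $\langle\sigma_1,\sigma_2\rangle$ only addresses the fixed-point set, not the double sum over $(\underline{\lambda}_1,\underline{\lambda}_2)\in G^2$ with the commutation constraint. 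The paper needs Proposition~\ref{prop:red}: the correspondence $\big((\underline{\lambda},\sigma),(\underline{\lambda}',\sigma')\big)\leftrightarrow\big((\underline{\lambda},\sigma),(\underline{\nu},\sigma(\sigma')^{-1})\big)$ preserves commutation and mutual fixed sets, and the Euclidean algorithm then reduces every $\chi^I_{f,G}(\sigma,\sigma')$ to $\chi^I_{f,G}\big(s^{m^*},1\big)$; some such counting argument must be supplied, and it is exactly here that cyclicity of $S^I$ is used.

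Second, the matching step is asserted rather than proved, and the proposed device is flawed: there is no element-by-element pairing of $(\underline{\lambda}_1,\underline{\lambda}_2)$ with a ``dual'' $(\underline{\mu}_1,\underline{\mu}_2)$ -- the isomorphism $G_{\widetilde f}\cong G_f^*$ and the Berglund--Henningson construction match subgroups (annihilators), not elements, and certainly not compatibly with the conditions $\underline{\lambda}\in\operatorname{Ker}C_\sigma+G_f^I$, $\underline{\lambda}'\in\operatorname{Ker}A_\sigma\cap G_f^I$, $\underline{\lambda}\in G$. Likewise the claim that $P-(-1)^\ell$ and $\big(P^k-(-1)^{k\ell}\big)/\big(P-(-1)^\ell\big)$ are ``manifestly exchanged under $E\mapsto E^T$'' is not an argument. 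What is actually needed is the closed formula of Proposition~\ref{prop:chisigma1}, $\chi_{f,G}^I(\sigma,1)=(-1)^{d^I_\sigma-1}\frac{1}{|G|}\cdot\frac{|\operatorname{Ker}A_\sigma|}{|\operatorname{Ker}A_\sigma\cap G_f^I|}\cdot\big|G\cap\big(\operatorname{Im}A_\sigma+G_f^I\big)\big|\cdot\big|G\cap\big(\operatorname{Ker}A_\sigma\cap G_f^I\big)\big|$, followed by group-theoretic bookkeeping: $|G\cap H|=|G|\,|H|/|G+H|$, duality of $G_f^I$ with $G_{\widetilde f}^{\overline I}$, duality of $\operatorname{Ker}A_\sigma$ and $\operatorname{Im}A_\sigma$ with $\operatorname{Im}A^*_\sigma$ and $\operatorname{Ker}A^*_\sigma$, the exchange of sums and intersections under duality, and $|\operatorname{Ker}A_\sigma|=|\operatorname{Ker}A^*_\sigma|$. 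None of this appears in your sketch beyond naming it ``the hardest step''; moreover the degenerate stratum $I=\varnothing$ (dual to $\overline{I}=I_0$), which absorbs the point terms of the reduced orbifold Euler characteristic, must be shown to obey the same formula rather than be deferred to an unspecified verification. So the proposal outlines the paper's route correctly at the level of strategy, but the two propositions that constitute the actual proof are left unproven, and the substitute ideas offered for them do not go through as stated.
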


One has ${\mathbb C}^n=\coprod_{I\subset I_0}({\mathbb C}^*)^I$ ($I_0=\{1,\dots, n\}$) and therefore
\[V_f=\coprod_{I\subset I_0,\, I\ne\varnothing} V_f^I
\]
($0\notin V_f$). The group $S$ acts on
$2^{I_0}$. One has the decomposition
\[
{\mathbb C}^n \setminus \{ 0 \} =\coprod_{{\mathcal J}\in\left(2^{I_0}\setminus\{\varnothing\}\right)/S}\coprod_{J\subset {\mathcal J}}({\mathbb C}^*)^J,
\]
where the (disjoint)
unions are over the orbits ${\mathcal J}$ of the $S$-action except the one of the empty set and over the elements of
the orbit. Therefore
\[
V_f=\coprod_{{\mathcal J}\in\left(2^{I_0}\setminus\{\varnothing\}\right)/S}\coprod_{J\subset {\mathcal J}}V_f^J.
\]

For $I\subset I_0$, let $S^I$ be the isotropy subgroup of $I$ for the action of $S$ on $2^{I_0}$. It is easy to see that
\[
\coprod_{J\subset {\mathcal J}}V_f^J=\operatorname{Ind}_{G\rtimes S^I}^{G\rtimes S} V_f^I
\]
for an element $I$ of ${\mathcal J}$. One has
\[
{\chi}^{\rm orb}\big(\operatorname{Ind}_{G\rtimes S^I}^{G\rtimes S} V_f^I, G\rtimes S\big)=
{\chi}^{\rm orb}\big(V_f^I, G\rtimes S^I\big)
\]
(see~\cite[Theorem~1]{GLM}).

Recall that, for a subset $I\subset I_0$, $f^I$ denotes the restriction of $f$ to ${\mathbb C}^I$. The polynomial $f^I$ has
not more than $\vert I\vert$ monomials and it has $\vert I\vert$ monomials if and only if
$\widetilde{f}^{\overline{I}}$ has $\vert \overline{I}\vert$ monomials. If $f^I$ has less than $\vert I\vert$
monomials, then
\[
{\chi}^{\rm orb}\big(V_f^I, G\rtimes S^I\big)=0 .
\]
This follows from~\cite[Lemma~1]{EG-IMRN}, which says that, in this case, the equivariant Euler characteristic $\chi^{G_f\rtimes S^I}\big(V_f^I\big)$ with values in the Burnside ring $A(G_f\rtimes S^I)$ is equal to zero, together with the facts that ${\chi}^{\rm orb}\big(V_f^I, G\rtimes S^I\big)$ is a reduction of $\chi^{G\rtimes S^I}\big(V_f^I\big)$ and $\chi^{G\rtimes S^I}\big(V_f^I\big)$ is a reduction of $\chi^{G_f\rtimes S^I}\big(V_f^I\big)$. Therefore, if $f^I$ has less than $\vert I\vert$ monomials, then both ${\chi}^{\rm orb}\big(V_f^I, G\rtimes S^I\big)$ and ${\chi}^{\rm orb}\big(V_{\widetilde{f}}^{\overline{I}}, \widetilde{G}\rtimes S^I\big)$ are equal to zero.

One has
\begin{align*}
{\chi}^{\rm orb}\big(V_f^I, G\rtimes S^I\big) & =\frac{1}{\vert G\vert\cdot\vert S^I\vert}
 \sum_{{((\underline{\lambda},\sigma),(\underline{\lambda}',\sigma'))\in (G\rtimes S^I)^2:}\atop
 {(\underline{\lambda},\sigma)(\underline{\lambda}',\sigma')=
 (\underline{\lambda}',\sigma')(\underline{\lambda},\sigma)}}
 \chi\big(\big(V_f^I\big)^{\langle(\underline{\lambda},\sigma),(\underline{\lambda}',\sigma')\rangle}\big)\\
 &= \frac{1}{\vert S^I\vert}\sum_{{(\sigma,\sigma')\in (S^I)^2\colon}\atop {\sigma\sigma'=\sigma'\sigma}}
 \frac{1}{\vert G\vert}
 \sum_{{(\underline{\lambda},\underline{\lambda}')\in G^2\colon }\atop
 {(\underline{\lambda},\sigma)(\underline{\lambda}',\sigma')=
 (\underline{\lambda}',\sigma')(\underline{\lambda},\sigma)}}
 \chi\big(\big(V_f^I\big)^{\langle(\underline{\lambda},\sigma),(\underline{\lambda}',\sigma')\rangle}\big).
\end{align*}
Let
\begin{gather}\label{eqn:chi_sigma}
 \chi_{f,G}^I(\sigma,\sigma'):=\frac{1}{\vert G\vert}
 \sum_{{(\underline{\lambda},\underline{\lambda}')\in G^2:}\atop
 {(\underline{\lambda},\sigma)(\underline{\lambda}',\sigma')=
 (\underline{\lambda}',\sigma')(\underline{\lambda},\sigma)}}
 \chi\big(\big(V_f^I\big)^{\langle(\underline{\lambda},\sigma),(\underline{\lambda}',\sigma')\rangle}\big).
\end{gather}

One has
\begin{gather*}
 {\chi}^{\rm orb}\big({\rm pt}, G\rtimes S^I\big) = \frac{1}{\vert S^I\vert}\sum_{(\sigma,\sigma')\in (S^I)^2}
\frac{1}{\vert G\vert}\big\vert\{(\underline{\lambda},\underline{\lambda}')\in G^2\colon
(\underline{\lambda},\sigma)(\underline{\lambda}',\sigma') =(\underline{\lambda}',\sigma')(\underline{\lambda},\sigma)
\}\big\vert.
\end{gather*}
Let
\[
 \chi_{f,G}^{\varnothing}(\sigma,\sigma'):= - \frac{1}{\vert G\vert}\big\vert\{(\underline{\lambda},\underline{\lambda}')
 \in G^2\colon
(\underline{\lambda},\sigma)(\underline{\lambda}',\sigma') =(\underline{\lambda}',\sigma')(\underline{\lambda},\sigma)
\}\big\vert.
\]

In these terms, one has
\begin{gather} \label{eqn:chi_f,G}
{\overline{\chi}}^{ \rm orb}\big(V_f, G\rtimes S^I\big) =
\sum_{{\mathcal J}=[I]\in 2^{I_0}/S} \frac{1}{\vert S^I\vert}
\sum_{{(\sigma,\sigma')\in (S^I)^2\colon }\atop {\sigma\sigma'=\sigma'\sigma}} \chi^I_{f,G}(\sigma, \sigma'),
\end{gather}
where the first sum runs over all the $S$-orbits in $2^{I_0}$ including the orbit of the empty set,
$[I]$~denotes the orbit of the subset $I$.

Let $S^I$ be a cyclic group ($\cong {\mathbb Z}_q$) and let $s$ be a generator of $S^I$.

\begin{Proposition} \label{prop:red}
 Let $\sigma=s^m$, $\sigma'=s^{m'}$. For $m^*=\gcd(m, m', q)$, one has
 $\chi_{f,G}^I(\sigma, \sigma')=\chi_{f,G}^I(s^{m^*}, 1)$.
\end{Proposition}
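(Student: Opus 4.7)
The plan is to reinterpret the sum defining $\chi^I_{f,G}(\sigma,\sigma')$ as a sum over group homomorphisms $\phi\colon\mathbb{Z}^2\to G\rtimes S^I$ and then apply an $SL_2(\mathbb{Z})$ change of basis of $\mathbb{Z}^2$ to relate it to the analogous sum for $\chi^I_{f,G}(s^{m^*},1)$.

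First I would observe that, because $S^I$ is abelian, the commutation of $(\underline{\lambda},\sigma)$ and $(\underline{\lambda}',\sigma')$ in $G\rtimes S^I$ amounts to saying the subgroup they generate is abelian. Consequently, commuting pairs with these fixed projections are in canonical bijection with group homomorphisms $\phi\colon\mathbb{Z}^2\to G\rtimes S^I$ satisfying $\phi(e_1)=(\underline{\lambda},\sigma)$ and $\phi(e_2)=(\underline{\lambda}',\sigma')$, where $e_1,e_2$ denote the standard basis of $\mathbb{Z}^2$. Composing with the natural projection $\pi\colon G\rtimes S^I\to S^I\cong\mathbb{Z}/q$ (sending $s\leftrightarrow 1$) yields the additive map $\pi_1(x,y)=xm+ym'$; likewise, homomorphisms corresponding to commuting pairs for $(s^{m^*},1)$ project to $\pi_2(x,y)=xm^*$.

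Next I would construct a matrix $B\in SL_2(\mathbb{Z})$ satisfying $\pi_2\circ B=\pi_1$. Writing $\bar q=q/m^*$, this reduces to finding a first row $(\alpha,\beta)$ with $\alpha\equiv m/m^*$ and $\beta\equiv m'/m^*\pmod{\bar q}$ and with $\gcd(\alpha,\beta)=1$, so that Bezout can complete $B$ to an $SL_2(\mathbb{Z})$ matrix. The hypothesis $m^*=\gcd(m,m',q)$ yields $\gcd(m/m^*,m'/m^*,\bar q)=1$, and a standard number-theoretic lemma then provides integer representatives with coprime entries.

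With such a $B$ in hand, precomposition $\phi\mapsto\phi\circ B$ gives a bijection between homomorphisms $\mathbb{Z}^2\to G\rtimes S^I$ with projection $\pi_2$ and those with projection $\pi_1$, and hence between the two sets of commuting pairs parametrizing the sums. Crucially, since $B$ is an automorphism of $\mathbb{Z}^2$, the images of $\phi$ and $\phi\circ B$ agree as subgroups of $G\rtimes S^I$, so the fixed point sets $(V_f^I)^{\langle g_1,g_2\rangle}$ match under the bijection and the Euler characteristic contributions coincide term by term. The main technical step is the elementary number-theoretic lemma producing a coprime lift; the conceptual heart is that switching to another generating pair of the same abelian subgroup, by a $GL_2(\mathbb{Z})$ transformation, leaves the subgroup and its fixed locus on $V_f^I$ unchanged.
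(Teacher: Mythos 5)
Your proof is correct and rests on the same idea as the paper's: a commuting pair is a homomorphism $\mathbb{Z}^2\to G\rtimes S^I$, and changing the generating pair of the abelian subgroup $\langle g_1,g_2\rangle$ by an invertible integer transformation leaves each summand of $\chi^I_{f,G}$ unchanged, so one only has to reduce $(m,m')$ to $(m^*,0)$ modulo $q$. The paper realizes this reduction stepwise, via the single correspondence $(g_1,g_2)\leftrightarrow\big(g_1,g_1g_2^{-1}\big)$ iterated through the Euclidean algorithm, whereas you build one $SL_2(\mathbb{Z})$ matrix directly, at the (acceptable) cost of the standard coprime-lifting lemma needed to complete the first row.
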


\begin{proof}
 For $(\underline{\lambda},\sigma)$ and $(\underline{\lambda}',\sigma')$ from $G \rtimes S$, let
 $\big(\underline{\nu},\sigma(\sigma')^{-1}\big):=(\underline{\lambda},\sigma)(\underline{\lambda}',\sigma')^{-1}$,
 where $\underline{\nu}=\underline{\nu}(\underline{\lambda},\underline{\lambda}')$. Then
 $(\underline{\lambda},\sigma)$ commutes with $(\underline{\lambda}',\sigma')$ if and only if
 $(\underline{\lambda},\sigma)$ commutes with $(\underline{\nu},\sigma(\sigma')^{-1})$.
 Moreover one has
 $\big(V_f^I\big)^{\langle(\underline{\lambda},\sigma),(\underline{\lambda}',\sigma')\rangle}=
 \big(V_f^I\big)^{\langle(\underline{\lambda},\sigma),(\underline{\nu},\sigma(\sigma')^{-1})\rangle}$.
 Therefore the correspondence
 $\big((\underline{\lambda},\sigma),(\underline{\lambda}',\sigma')\big)\longleftrightarrow
 \big((\underline{\lambda},\sigma),
 \big(\underline{\nu}(\underline{\lambda},\underline{\lambda}'),\sigma(\sigma')^{-1}\big)\big)
 $
 preserves the summands in~(\ref{eqn:chi_sigma}) and therefore
 \[
 \chi_{f,G}^I(\sigma,\sigma')=\chi_{f,G}^I\big(\sigma,\sigma(\sigma')^{-1}\big).
 \]
Then the Euclidian algorithm implies the statement. The arguments are valid for $I=\varnothing$ as well.
\end{proof}

Now we shall compute $\chi_{f,G}^I(\sigma, 1)$ for $I$ such that $f^I$ has $\vert I\vert$ monomials.
(This includes $I=\varnothing$.)

\begin{Proposition} \label{prop:chisigma1}
 In this case,
 \begin{gather}
 \chi_{f,G}^I(\sigma, 1)= (-1)^{d^I_\sigma-1}\frac{1}{\vert G\vert}\cdot
 \frac{\vert \operatorname{Ker}A_{\sigma}\vert}{\vert \operatorname{Ker}A_{\sigma}\cap G^I_f\vert}\nonumber\\
 \hphantom{\chi_{f,G}^I(\sigma, 1)=}{}\times \big\vert G\cap\big(\operatorname{Im}A_{\sigma}+G_f^I\big)\big\vert\cdot\big\vert G\cap\big(\operatorname{Ker}A_{\sigma}\cap G_f^I\big)\big\vert,\label{eqn:chisigma1}
 \end{gather}
 where $d^I_\sigma= \dim\big({\mathbb C}^I\big)^{\langle \sigma \rangle}$.
\end{Proposition}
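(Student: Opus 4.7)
The proof is a direct computation from definition~(\ref{eqn:chi_sigma}) specialised to $\sigma'=1$; the two preparatory results Proposition~\ref{prop:fixed_point} and Proposition~\ref{prop:Euler} do most of the work, and what remains is bookkeeping.

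First, I would extract the commutativity condition on the pair $((\underline{\lambda},\sigma),(\underline{\lambda}',1))$. By Remark~\ref{rem:comm} this is $A_1(\underline{\lambda})=A_\sigma(\underline{\lambda}')$; but $A_1$ is identically trivial, so the condition simplifies to $\underline{\lambda}'\in\operatorname{Ker}A_\sigma$ with no constraint on $\underline{\lambda}$ beyond $\underline{\lambda}\in G$. Next, I would evaluate the joint fixed set $(V_f^I)^{\langle(\underline{\lambda},\sigma),(\underline{\lambda}',1)\rangle}$. The element $(\underline{\lambda}',1)$ acts on $({\mathbb C}^*)^I$ by $x_i\mapsto\lambda'_ix_i$, so its fixed locus there equals all of $({\mathbb C}^*)^I$ when $\underline{\lambda}'\in G_f^I$ and is empty otherwise. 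Hence the joint fixed set is either empty or coincides with $(V_f^I)^{\langle(\underline{\lambda},\sigma)\rangle}$, and the sum over admissible $\underline{\lambda}'\in G$ simply contributes the factor $|G\cap\operatorname{Ker}A_\sigma\cap G_f^I|$.

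For the remaining sum over $\underline{\lambda}\in G$, Proposition~\ref{prop:fixed_point} combined with Proposition~\ref{prop:C=A} says the fixed set is non-empty exactly when $\underline{\lambda}\in\operatorname{Im}A_\sigma+G_f^I$, and Proposition~\ref{prop:Euler} evaluates the Euler characteristic of that fixed set as the $\underline{\lambda}$-independent quantity $(-1)^{d^I_\sigma-1}|\operatorname{Ker}A_\sigma|/|\operatorname{Ker}A_\sigma\cap G_f^I|$. Multiplying this constant by the count $|G\cap(\operatorname{Im}A_\sigma+G_f^I)|$ and by the factor from the $\underline{\lambda}'$-sum, and dividing by $|G|$, yields exactly~(\ref{eqn:chisigma1}).

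The one point requiring separate attention is $I=\varnothing$, where $\chi_{f,G}^\varnothing$ was defined ad hoc (with its own sign) rather than as an honest Euler characteristic. Here I would just check by hand that, with $G_f^\varnothing=G_f$ and $d^\varnothing_\sigma=0$, the right-hand side of~(\ref{eqn:chisigma1}) collapses to $-|G\cap\operatorname{Ker}A_\sigma|$, which matches the ad hoc definition directly. No step is genuinely hard—the whole proof is structural accounting once the preparatory propositions are in place; the only thing to be careful about is that the formula is assembled as a product of \emph{independent} counts (one factor each from the $\underline{\lambda}'$-sum, the $\underline{\lambda}$-sum, and the constant Euler-characteristic value), which works precisely because the fixed-point value in Proposition~\ref{prop:Euler} does not depend on the specific $\underline{\lambda}$ satisfying the non-emptiness condition.
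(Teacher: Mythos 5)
Your proposal is correct and follows essentially the same route as the paper's own proof: it reduces the commutation condition via Remark~\ref{rem:comm} to $\underline{\lambda}'\in\operatorname{Ker}A_\sigma$, uses Propositions~\ref{prop:fixed_point} and~\ref{prop:C=A} to count the admissible $\underline{\lambda}\in G\cap\big(\operatorname{Im}A_\sigma+G_f^I\big)$, uses Proposition~\ref{prop:Euler} for the $\underline{\lambda}$-independent Euler characteristic of the joint fixed set (which equals $\big(V_f^I\big)^{\langle(\underline{\lambda},\sigma)\rangle}$ when $\underline{\lambda}'\in G_f^I$), and verifies the degenerate case $I=\varnothing$ against the ad hoc definition, exactly as in the paper. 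No gaps.
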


\begin{proof}
 First, let $I$ be non-empty. The fixed
 point set of $(\underline{\lambda},\sigma)$ in $({\mathbb C}^*)^I$ is not empty if and only if
 $\underline{\lambda}\in \operatorname{Ker}C_{\sigma}+G_f^I$ (Proposition~\ref{prop:fixed_point}).
 Thus the number of the elements $\underline{\lambda}\in G$ with a non-empty fixed point set
 $\big(({\mathbb C}^*)^I\big)^{\langle(\underline{\lambda},\sigma)\rangle}$ is equal to
 $\big\vert G\cap\big(\operatorname{Ker}C_{\sigma}+G_f^I\big)\big\vert=\big\vert G\cap\big(\operatorname{Im}A_{\sigma}+G_f^I\big)\big\vert$
 (see Proposition~\ref{prop:C=A}).

 The fixed point set of $(\underline{\lambda}',1)$ in $({\mathbb C}^*)^I$ is empty for
 $\underline{\lambda}'\notin G_f^I$ and is equal to $({\mathbb C}^*)^I$ for $\underline{\lambda}'\in G_f^I$.
 The element $(\underline{\lambda}',1)\in G\rtimes S$ commutes with $(\underline{\lambda},\sigma)$ if and only if
 $A_{\sigma}(\underline{\lambda}')=A_{1}(\underline{\lambda})=1$, i.e., if
 $\underline{\lambda}'\in\operatorname{Ker}A_{\sigma}$. Thus, for a fixed $\underline{\lambda}\in G$ with a non-empty
 fixed point set $\left(({\mathbb C}^*)^I\right)^{\langle (\underline{\lambda},\sigma) \rangle}$, the number of elements
 $(\underline{\lambda}',1)\in G\rtimes S$ commuting with $(\underline{\lambda},\sigma)$ and having a non-empty
 fixed point set in $({\mathbb C}^*)^I$ (coinciding with $({\mathbb C}^*)^I$) is equal to
 $\big\vert G\cap\big(\operatorname{Ker}A_{\sigma}\cap G_f^I\big)\big\vert$. In this case
 $\big(V_f^I\big)^{\langle(\underline{\lambda},\sigma),(\underline{\lambda}',1)\rangle}=
 \big(V_f^I\big)^{\langle(\underline{\lambda},\sigma)\rangle}$ (since $\underline{\lambda}'\in G_f^I$) and
 according to Proposition~\ref{prop:Euler} one has
 \[
 \chi\big(\big(V_f^I\big)^{\langle(\underline{\lambda},\sigma),(\underline{\lambda}',1)\rangle}\big) =(-1)^{d^I_\sigma-1}
 \frac{\vert \operatorname{Ker}A_{\sigma}\vert}{\big\vert \operatorname{Ker}A_{\sigma}\cap G^I_f\big\vert}.
 \]
 This proves the statement for a non-empty $I$.

 Let us show that equation~(\ref{eqn:chisigma1}) holds for $I=\varnothing$ as well. In this case $G_f^I=G_f$
 and the right hand side of~(\ref{eqn:chisigma1}) degenerates to $-\vert G\cap \operatorname{Ker}A_{\sigma}\vert$
 (since $\operatorname{Ker}A_{\sigma}\cap G^I_f=\operatorname{Ker}A_{\sigma}$,
 $G\cap\big(\operatorname{Im}A_{\sigma}+G_f^I\big)=G$, $G\cap\big(\operatorname{Ker}A_{\sigma}\cap G_f^I\big)=G\cap\operatorname{Ker}A_{\sigma}$).
 For an arbitrary element $\underline{\lambda}\in G$ (their number being equal to $\vert G\vert$),
 $(\underline{\lambda}, \sigma)$ commutes with $(\underline{\lambda}', 1)$ if and only if
 $A_{\sigma}(\underline{\lambda}')=A_{1}(\underline{\lambda})=1$ (see Remark~\ref{rem:comm}), i.e., if
 $\underline{\lambda}'\in G\cap \operatorname{Ker}A_{\sigma}$. Therefore
 \[
 \big\vert\big\{(\underline{\lambda},\underline{\lambda}')\in G^2\colon (\underline{\lambda}, \sigma) \text{ and }
 (\underline{\lambda}', 1) \text{ commute}\big\}\big\vert=\vert G\vert\cdot\vert G\cap \operatorname{Ker}A_{\sigma}\vert
 \]
 and $\chi_{f,G}^I(\sigma, 1)=-\vert G\cap \operatorname{Ker}A_{\sigma}\vert$.
\end{proof}

\begin{proof}[Proof of Theorem~\ref{theo:Main}] From (\ref{eqn:chi_f,G}) together with Proposition~\ref{prop:red}, it follows that it is sufficient to show that, for $I$ such that $f^I$ has $\vert I\vert$ monomials, one has
\begin{gather}\label{eqn:equal_chi_sign}
 \chi_{f,G}^I(\sigma,1)= (-1)^n
 \chi_{\widetilde{f},\widetilde{G}}^{\overline{I}}(\sigma,1) ,
\end{gather}
$\sigma \in S^I$.
According to Proposition~\ref{prop:Euler}, the signs in all non-zero summands on the left hand side
and on the right hand side of (\ref{eqn:equal_chi_sign}) (see Definition~(\ref{eqn:chi_sigma})) are
\[
(-1)^{\dim \left({\mathbb C}^I\right)^{\langle \sigma \rangle}-1} \qquad \text{and} \qquad
 (-1)^n (-1)^{\dim\big({\mathbb C}^{\overline{I}}\big)^{\langle\sigma \rangle}-1},
 \] respectively. The condition PC gives
\[
\dim\big({\mathbb C}^n\big)^{\langle \sigma \rangle}=\dim\big({\mathbb C}^I\big)^{\langle \sigma \rangle}+
\dim\big({\mathbb C}^{\overline{I}}\big)^{\langle \sigma \rangle}\equiv n \mod 2 .
\]
Therefore the signs on the left hand side and on the right hand side coincide and to prove the statement it is
sufficient to show that in this case
\[
 \big\vert \chi_{f,G}^I(\sigma,1)\big\vert= \big\vert\chi_{\widetilde{f},\widetilde{G}}^{\overline{I}}(\sigma,1)\big\vert .
\]

One has
\begin{gather*}
\big\vert G\cap\big(\operatorname{Im}A_{\sigma} + G_f^I\big)\big\vert =
\frac{\vert G\vert\cdot\big\vert\operatorname{Im}A_{\sigma} + G_f^I\big\vert}{\big\vert G+\big(\operatorname{Im}A_{\sigma} + G_f^I\big)\big\vert}, \\
\big\vert G\cap\big(\operatorname{Ker}A_{\sigma}\cap G_f^I\big)\big\vert =
\frac{\vert G\vert\cdot\big\vert\operatorname{Ker}A_{\sigma}\cap G_f^I\big\vert}{\big\vert G+\big(\operatorname{Ker}A_{\sigma}\cap G_f^I\big)\big\vert}.
\end{gather*}
Therefore Proposition~\ref{prop:chisigma1} gives
\[
 \big\vert \chi_{f,G}^I(\sigma,1)\big\vert=\vert\operatorname{Ker}A_{\sigma}\vert\cdot\vert G\vert\cdot
\frac{\big\vert\operatorname{Im}A_{\sigma}+ G_f^I\big\vert}
{\big\vert G+\big(\operatorname{Im}A_{\sigma}+ G_f^I\big)\big\vert\cdot\big\vert G+\big(\operatorname{Ker}A_{\sigma}\cap G_f^I\big)\big\vert}.
\]
The subgroup of $G_{\widetilde{f}}$ dual to $G_f^I$ is $G_{\widetilde{f}}^{\overline{I}}$
(see~\cite[Lemma~1]{EG-BLMS}).
The subgroups $\operatorname{Ker}A_{\sigma}$ and $\operatorname{Im}A_{\sigma}$ of $G_f$ are dual to the subgroups
$\operatorname{Im}A^*_{\sigma}$ and $\operatorname{Ker}A^*_{\sigma}$ of $G_{\widetilde{f}}$, respectively.
(The homomorphism $A^*_{\sigma}\colon G_{\widetilde{f}}\to G_{\widetilde{f}}$ is in fact the corresponding homomorphism
$A_{\sigma}$ on this group. We keep the notation $A^*_{\sigma}$ to recall what it is acting on.) In particular,
$\vert \operatorname{Ker}A_{\sigma}\vert=\frac{\vert G_f\vert}{\vert \operatorname{Im}A^*_{\sigma}\vert}$.
Therefore the subgroups dual to $\operatorname{Im}A_{\sigma}+ G_f^I$, $G+\big(\operatorname{Im}A_{\sigma}+ G_f^I\big)$, and
$G+\big(\operatorname{Ker}A_{\sigma}\cap G_f^I\big)$ are $\operatorname{Ker}A^*_{\sigma}\cap G_{\widetilde{f}}^{\overline{I}}$,
$\widetilde{G}\cap\big(\operatorname{Ker}A^*_{\sigma} \cap G_{\widetilde{f}}^{\overline{I}}\big)$, and
$\widetilde{G} \cap\big(\operatorname{Im}A^*_{\sigma} + G_{\widetilde{f}}^{\overline{I}}\big)$, respectively. Hence one gets
\begin{gather}\label{eqn:chi_sigma_final}
 \big\vert \chi_{f,G}^I(\sigma,1) \big\vert =
 \frac{\vert G \vert}{\vert G_f \vert} \cdot \frac{\vert\operatorname{Ker}
 A_{\sigma}\vert}{\big\vert\operatorname{Ker}A^*_{\sigma} \cap G_{\widetilde{f}}^{\overline{I}}\big\vert}\cdot
 \big\vert\widetilde{G}\cap\big(\operatorname{Ker}A^*_{\sigma} \cap G_{\widetilde{f}}^{\overline{I}}\big)\big\vert
 \cdot
 \big\vert\widetilde{G}\cap\big(\operatorname{Im}A^*_{\sigma}+ G_{\widetilde{f}}^{\overline{I}}\big)\big\vert.
\end{gather}
One has $\vert\operatorname{Ker}A_{\sigma}\vert=\frac{\vert G_f\vert}{\vert\operatorname{Im}A_{\sigma}\vert}$. Since the subgroup
dual to $\operatorname{Im}A_{\sigma}$ is $\operatorname{Ker}A^*_{\sigma}$, one gets
$\vert\operatorname{Ker}A_{\sigma}\vert=\vert\operatorname{Ker}A^*_{\sigma}\vert$. Therefore the right hand side of~(\ref{eqn:chi_sigma_final}) coincides with $\big\vert \chi_{\widetilde{f},\widetilde{G}}^{\overline{I}}(\sigma,1) \big\vert$.

This proves Theorem~\ref{theo:Main}.
\end{proof}

\begin{Remark}The result in \cite[Theorem~4]{EG-1811} is a particular case of Theorem~\ref{theo:Main}.
\end{Remark}

\begin{Remark} One can show that
 \[
 {\overline{\chi}}^{ \rm orb}(V_f,G \rtimes S)=-{\chi}^{\rm orb}\big({\mathbb C}^n,V_f;G \rtimes S\big),
 \]
 where ${\chi}^{\rm orb}\big({\mathbb C}^n,V_f;G \rtimes S\big)$ is the orbifold Euler characteristic of the pair $({\mathbb C}^n,V_f)$
 (which is equal to ${\chi}^{\rm orb}\big({\mathbb C}^n/V_f,G \rtimes S\big)$). Therefore equation~(\ref{eqn:Main}) is equivalent to
 \[
 {\chi}^{\rm orb}\big({\mathbb C}^n,V_f;G \rtimes S\big)=(-1)^n{\chi}^{\rm orb}\big({\mathbb C}^n,V_{\widetilde{f}};\widetilde{G} \rtimes S\big).
 \]
\end{Remark}

\section{Examples} \label{sect:Examples}

Theorem~\ref{theo:Main} gives the orbifold Euler characteristics of the Milnor fibres of pairs BHHT-dual to
a number of examples from \cite[Table~2]{Mukai} which are not present in the table. For instance, in Examples~33 and~34 from \cite{Mukai} one has the pairs $(f,G \rtimes S)$, where
\[
f=x_1^4x_2+x_2^4x_1+x_3^4x_4+x_4^4x_3+x_5^5, \qquad G=\left\langle\tfrac{1}{3}(1,2,0,0,0), \tfrac{1}{3}(0,0,1,2,0), J\right\rangle,
\]
$S=\langle(13)(24)\rangle$ in Example~33 and $S=\langle(12)(34)\rangle$ in Example~34. Here $\frac{1}{m}(a_1,\dots, a_5)$ means the operator $\operatorname{diag} \big(\exp\frac{2\pi a_1 {\rm i}}{m},\dots,\exp\frac{2\pi a_5{\rm i}}{m}\big)$ and $J=\frac{1}{5}(1,1,1,1,1)$ is the exponential grading operator. For the BHHT-dual pairs $\big(\widetilde{f}, \widetilde{G} \rtimes S\big)$ one has $\widetilde{f}=f$ and $\widetilde{G}=\big\langle\frac{1}{5}(1,1,4,4,0), J\big\rangle$. The data from \cite[Table~2]{Mukai} (compiled with the use of a computer) together with Theorem~\ref{theo:Main} give that in the cases dual to Examples~33 and~34 the orbifold Euler characteristics of the Milnor fibres are equal to $8$ and $-16$, respectively.

In the same way, one gets the following table of orbifold Euler characteristics of the Milnor fibres of the
BHHT-dual pairs for some other examples. Here the first line indicates the number of the example from
\cite[Table~2]{Mukai} and the second one gives the orbifold Euler characteristic for the dual pair.
(Let us recall that in all these cases the dual pairs are not present in \cite[Table~2]{Mukai}.)

\begin{center}
\begin{tabular}{cccccccccccccccc}
\hline
 36 & 45 & 46 & 48 & 50 & 53 & 60 & 65 & 68 & 69 & 71 & 72 & 77 & 79 & 81 & 87\\
 8 & 40 & $-16$ & 16 & 8 & 16 & $-16$ & 8 & 112 & 112 & 112 & 88 & 112 & 88 & 88 & 40\\
\hline
\end{tabular}
\end{center}

\subsection*{Acknowledgements}
This work was partially supported by DFG. The work of the second author (Sections~\ref{sect:invertible_poly} and \ref{sect:Symmetry}) was supported by the grant 16-11-10018 of the Russian Foundation for Basic Research. We are very grateful to the referees of the paper for their useful comments.

\pdfbookmark[1]{References}{ref}
\LastPageEnding

\end{document}